\documentclass[11pt,reqno]{amsart}

\usepackage[a4paper,margin=2.5cm]{geometry}
\usepackage{amsmath,amssymb,amsthm,mathtools}
\usepackage{xcolor}
\usepackage{hyperref}
\usepackage{url}

\hypersetup{
  colorlinks=true,
  linkcolor=blue!50!black,
  citecolor=blue!50!black,
  urlcolor=blue!50!black
}

\newcommand{\R}{\mathbb{R}}
\newcommand{\T}{\mathbb{T}}
\newcommand{\E}{\mathbb{E}}
\newcommand{\Pp}{\mathbb{P}}
\newcommand{\dd}{\,\mathrm{d}}
\newcommand{\norm}[1]{\left\lVert #1\right\rVert}
\newcommand{\abs}[1]{\left\lvert #1\right\rvert}
\newcommand{\Leray}{\mathbb{P}}
\DeclareMathOperator{\diver}{div}

\DeclareMathOperator{\rank}{rank}
\DeclareMathOperator{\tr}{tr}

\def\Xint#1{\mathchoice
  {\XXint\displaystyle\textstyle{#1}}%
  {\XXint\textstyle\scriptstyle{#1}}%
  {\XXint\scriptstyle\scriptscriptstyle{#1}}%
  {\XXint\scriptscriptstyle\scriptscriptstyle{#1}}%
  \!\int}
\def\XXint#1#2#3{{\setbox0=\hbox{$#1{#2#3}{\int}$}
  \vcenter{\hbox{$#2#3$}}\kern-.5\wd0}}
\def\fint{\Xint-}

\newtheorem{theorem}{Theorem}[section]
\newtheorem{proposition}[theorem]{Proposition}
\newtheorem{lemma}[theorem]{Lemma}

\theoremstyle{definition}

\newtheorem{remark}[theorem]{Remark}

\begin{document}

\title[Mean--covariance dynamics of the stochastic Weber field]{Exact mean--covariance dynamics of the Weber field in the stochastic Lagrangian representation of the 3D Navier--Stokes equations}

\author{Triphop Mahithitarmmatorn}
\email{triphop.mahi@gmail.com}

\date{\today}

\subjclass[2020]{35Q30, 76D05, 60H15}
\keywords{Navier--Stokes equations, stochastic Lagrangian representation, Constantin--Iyer formula, Weber field, covariance, regularity criteria}

\begin{abstract}
The Constantin--Iyer formula represents a smooth solution of the incompressible
Navier--Stokes equations on $\T^3$ as
$u=\Leray\,\E[(\nabla A_t)^\top(u_0\circ A_t)]$, the projected expectation of a stochastic
Weber field. We separate this expectation into the product of the means and a centred
covariance, and show that the covariance --- together with the second moment of the inverse
deformation gradient and the two-point covariance --- satisfies a \emph{closed}
deterministic advection--diffusion equation: because every Lagrangian label is driven by
the same Brownian path, the quadratic covariation collapses into a Laplacian and no moment
hierarchy appears. From these equations we obtain an energy/production balance, pointwise
and H\"older covariance bounds, and an exact degeneracy of the two-point operator in the
separation variable. A second group of results concerns the mean displacement, which
solves a forced heat equation along the flow; its $L^2$ norm is controlled unconditionally
by the initial energy, while an exact family of Navier--Stokes shear flows shows that no
analogous H\"older bound can hold uniformly down to $t=0$. We further isolate several
obstructions, each backed by an explicit counterexample --- the norm of an expected
deformation gradient does not control the expected norm, even for genuine stochastic
flows of smooth divergence-free drifts; a spatial
H\"older bound on the mean does not imply parabolic Campanato decay --- and we prove a
gauge-invariant quotient criterion equivalent to the Serrin norm of the velocity, together
with a Liouville-type rigidity theorem for ancient solutions whose mean coordinate is
affine of rank at least one. No claim is made regarding global regularity; the missing
steps are stated explicitly as open problems.
\end{abstract}

\maketitle

\section{Introduction}

Let $\T^3=\R^3/\mathbb Z^3$ and $\nu>0$, and let $u$ be a smooth divergence-free solution
of the incompressible Navier--Stokes equations
\begin{equation}\label{eq:NS}
  \partial_t u+(u\cdot\nabla)u-\nu\Delta u+\nabla p=0,\qquad \diver u=0,
\end{equation}
on a time interval $[0,T]$, with initial datum $u_0$. Constantin and Iyer
\cite{ConstantinIyer2008} associate to $u$ the stochastic flow
\begin{equation}\label{eq:flow}
  \dd X_t(a)=u(t,X_t(a))\dd t+\sqrt{2\nu}\,\dd W_t,\qquad A_t=X_t^{-1},
\end{equation}
driven by a single Brownian motion $W$ common to all labels $a$, and prove the
representation
\begin{equation}\label{eq:CI}
  u(t)=\Leray\,\E\!\left[(\nabla A_t)^{\top}(u_0\circ A_t)\right],
\end{equation}
where $\Leray$ denotes the Leray projection. The random field
$F:=(\nabla A_t)^\top(u_0\circ A_t)$ is the stochastic Weber field; its inviscid
counterpart is the classical Weber formula for the Euler equations, and the stochastic
perturbation of Lagrangian trajectories underlying \eqref{eq:flow} was introduced in
\cite{Iyer2006CMP}.

Formula \eqref{eq:CI} expresses the velocity as the expectation of a random field, and it
is tempting to hope that the \emph{mean} deformation $\E\nabla A_t$ alone might control the
solution. It cannot, as we show below, and the reason is instructive: the expectation in
\eqref{eq:CI} couples the stretching directions of $\nabla A_t$ to the values of the
initial datum sampled along the same random trajectory. The natural object separating
these effects is the centred covariance of the Weber field, and the starting point of this
paper is the observation that this covariance obeys a closed, exactly computable
deterministic equation. Throughout we write
\[
  J_t:=\nabla A_t,\quad V_t:=u_0\circ A_t,\quad
  M:=\E J_t,\quad N:=\E V_t,\quad \bar F:=\E[J_t^\top V_t],
\]
\[
  \mathcal R:=\bar F-M^\top N=\E\!\left[(J_t-M)^\top(V_t-N)\right],\qquad
  \mathcal D_u:=\partial_t+u\cdot\nabla-\nu\Delta,\qquad G:=\nabla u.
\]
Our matrix convention is $(\nabla u)_{ij}=\partial_j u_i$; matrix norms are operator norms.
The Leray projection is bounded on $C^\alpha(\T^3)$ for $0<\alpha<1$ and on $L^q(\T^3)$ for
$1<q<\infty$.

\subsection*{Standing assumptions}
Unless stated otherwise, $u$ is a classical solution of \eqref{eq:NS} on $[0,T]$, smooth on
$\T^3\times[0,T]$. Under this hypothesis the flow \eqref{eq:flow} consists,
almost surely, of smooth volume-preserving diffeomorphisms, and all spatial derivatives of
$X_t$, $A_t$ appearing below admit deterministic pathwise bounds
(Section~\ref{sec:prelim}); in particular every expectation we write is finite, and every
stochastic integral we discard is a genuine martingale. All identities are derived in this
classical regime; none of them requires an a priori bound beyond smoothness on the given
interval.

\subsection*{Main results and organization}
Section~\ref{sec:prelim} fixes notation and records the pathwise deformation bounds and
the averaged transport equations. Section~\ref{sec:cov} contains the structural core: the
closed covariance equation (Theorem~\ref{thm:Req}), the closed equation for the positive
deformation-variance tensor $Q$ (Theorem~\ref{thm:Qeq}), the energy/production balance
(Proposition~\ref{prop:balance}), and one- and two-point covariance bounds
(Proposition~\ref{prop:covbounds}). Section~\ref{sec:twopoint} extends the computation to
two points and identifies an exact degeneracy of the resulting operator in the separation
variable (Theorem~\ref{thm:twopoint}, Proposition~\ref{prop:degeneracy}).
Section~\ref{sec:caloric} treats the mean displacement: an unconditional energy bound
(Theorem~\ref{thm:energy}) and a sharp positive-time H\"older estimate for exact shear
solutions (Proposition~\ref{prop:shearM}). Section~\ref{sec:obstr} collects obstructions,
each of which rules out a specific proof strategy. Section~\ref{sec:crit} contains the
Serrin--Weber quotient criterion (Theorem~\ref{thm:SW}) and the affine rigidity theorem
(Theorem~\ref{thm:rigidity}). Open problems are collected in Section~\ref{sec:further}.

\subsection*{What is not claimed}
This paper makes no claim on the global regularity problem for \eqref{eq:NS}. Every
statement below is either an exact consequence of \eqref{eq:flow}--\eqref{eq:CI} for
smooth solutions, an explicit counterexample, or a conditional criterion whose hypotheses
we do not verify. The steps that would be needed to convert these structures into an
unconditional result are isolated in Section~\ref{sec:further}, and we prove none of them.

\section{Preliminaries}\label{sec:prelim}

\subsection{Pathwise deformation bounds}
For the general theory of stochastic flows of diffeomorphisms we refer to
\cite{Kunita1997}. Let $L(t):=\int_0^t\norm{\nabla u(s)}_{L^\infty}\dd s$. Since the noise in \eqref{eq:flow}
does not depend on the label, the Jacobian $\mathcal J_t:=\nabla_a X_t$ solves the pathwise
linear equation $\dot{\mathcal J}_t=(\nabla u)(t,X_t)\mathcal J_t$ with
$\mathcal J_0=I$, so that $\norm{\mathcal J_t}_{L^\infty}\le e^{L(t)}$ by Gr\"onwall. The
Liouville identity
$\frac{\dd}{\dd t}\det\mathcal J_t=(\diver u)(t,X_t)\det\mathcal J_t=0$ gives
$\det\mathcal J_t\equiv1$: the flow preserves volume path by path. The inverse
$\mathcal K_t:=\mathcal J_t^{-1}$ solves
$\dot{\mathcal K}_t=-\mathcal K_t(\nabla u)(t,X_t)$, whence
$\norm{\mathcal K_t}_{L^\infty}\le e^{L(t)}$ as well. Since $X_t$ is onto,
\begin{equation}\label{eq:JKbound}
  \norm{\nabla X_t}_{L^\infty}\le e^{L(t)},\qquad
  \norm{\nabla A_t}_{L^\infty}\le e^{L(t)}\qquad\text{almost surely.}
\end{equation}
We note in passing that \eqref{eq:JKbound} is sharper than the quadratic bound
$\norm{\mathcal J^{-1}}\le C\norm{\mathcal J}^2$ obtained from the cofactor formula and
$\det\mathcal J_t=1$. Higher spatial derivatives of $X_t$ and $A_t$ obey analogous
deterministic bounds on $[0,T]$, obtained by differentiating the variational equations;
these are what make all expectations below finite and all discarded stochastic integrals
martingales.

\subsection{Averaged transport equations}
By the It\^o--Wentzell formula applied to the identity $A_t(X_t(a))=a$, the inverse flow
satisfies the SPDE
\begin{equation}\label{eq:SPDEA}
  \dd A_t+(u\cdot\nabla)A_t\dd t-\nu\Delta A_t\dd t+\sqrt{2\nu}\,\nabla A_t\dd W_t=0 ,
\end{equation}
and consequently $V_t=u_0\circ A_t$, $J_t=\nabla A_t$ and the Weber field $F=J_t^\top V_t$
satisfy SPDEs of the same form, with zero-order terms determined by the chain rule. Taking
expectations --- the stochastic integrals have zero mean thanks to the pathwise bounds
\eqref{eq:JKbound} and their higher-order analogues --- yields the deterministic system
\begin{equation}\label{eq:MNF}
  \mathcal D_u M+MG=0,\qquad \mathcal D_u N=0,\qquad \mathcal D_u\bar F+G^\top\bar F=0,
\end{equation}
with data $M(0)=I$, $N(0)=u_0$, $\bar F(0)=u_0$. The representation \eqref{eq:CI} reads
$u=\Leray\bar F$.

\begin{remark}[Measurability]\label{rem:meas}
The space $C^\alpha(\T^3)$ is not separable, so Bochner integration in $C^\alpha$ requires
a word of care. Two standard remedies apply. Either one works pointwise: all the
identities below are identities between continuous fields, and expectations may be taken
pointwise in $x$, with Cauchy--Schwarz applied for each pair $(x,y)$ separately; or one
fixes $0<\beta<\alpha$ and notes that the embedding
$C^\alpha\hookrightarrow C^\beta$ takes values in the separable little H\"older space
$c^\beta$, in which the fields above are strongly measurable. Either reading makes every
statement in this paper rigorous; we will not comment on this again.
\end{remark}

We use repeatedly the following product rule for $\mathcal D_u$ acting on
\emph{deterministic} fields: if $P$ is a matrix field and $q$ a vector field,
\begin{equation}\label{eq:prodrule}
  \mathcal D_u(P^\top q)=(\mathcal D_uP)^\top q+P^\top\mathcal D_uq
  -2\nu\sum_{k=1}^3(\partial_kP)^\top(\partial_kq),
\end{equation}
an immediate consequence of the Leibniz rule and
$\Delta(P^\top q)=(\Delta P)^\top q+P^\top\Delta q+2\sum_k(\partial_kP)^\top(\partial_kq)$.

\section{The mean--covariance calculus}\label{sec:cov}

With the expectations understood as in Remark~\ref{rem:meas}, the bilinear expansion of
$\E[J^\top V]$ is exact:
\begin{equation}\label{eq:decomp}
  \E[J_t^\top V_t]=M^\top N+\mathcal R.
\end{equation}
Thus \eqref{eq:CI} becomes $u=\Leray(M^\top N+\mathcal R)$. The product of the means is
blind to the correlation between the stretching directions of $J_t$ and the data values
sampled by $V_t$ along the same trajectory; that correlation is carried entirely by
$\mathcal R$, and the theme of this section is that $\mathcal R$ has its own closed
dynamics.

\subsection{A closed equation for the covariance}

\begin{theorem}[Closed covariance equation]\label{thm:Req}
For a smooth solution of \eqref{eq:NS}, the covariance field
$\mathcal R=\bar F-M^\top N$ solves
\begin{equation}\label{eq:Req}
  \mathcal D_u\mathcal R+G^\top\mathcal R
  =2\nu\sum_{k=1}^3(\partial_kM)^\top(\partial_kN),\qquad \mathcal R(0)=0.
\end{equation}
The system \eqref{eq:MNF}--\eqref{eq:Req} is closed in $(u,M,N,\mathcal R)$: no higher
moments appear.
\end{theorem}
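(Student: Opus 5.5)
The plan is to derive \eqref{eq:Req} purely at the level of the deterministic averaged system \eqref{eq:MNF}, using the product rule \eqref{eq:prodrule}. All stochastic work is already encoded in \eqref{eq:MNF}, which we take as given from Section~\ref{sec:prelim}. Since $\mathcal R=\bar F-M^\top N$ and $\mathcal D_u$ is linear, it suffices to compute $\mathcal D_u(M^\top N)$ and subtract it from $\mathcal D_u\bar F=-G^\top\bar F$.

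\textbf{Step 1: apply the product rule.} Applying \eqref{eq:prodrule} with $P=M$ and $q=N$, both deterministic and smooth by the standing assumptions, gives
\[
\mathcal D_u(M^\top N)=(\mathcal D_uM)^\top N+M^\top\mathcal D_uN-2\nu\sum_k(\partial_kM)^\top(\partial_kN).
\]
Substituting $\mathcal D_uM=-MG$ and $\mathcal D_uN=0$ yields
\[
\mathcal D_u(M^\top N)=-G^\top M^\top N-2\nu\sum_k(\partial_kM)^\top\partial_kN.
\]

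\textbf{Step 2: subtract.} Subtracting this from $\mathcal D_u\bar F=-G^\top\bar F$ gives
\[
\mathcal D_u\mathcal R=-G^\top(\bar F-M^\top N)+2\nu\sum_k(\partial_kM)^\top\partial_kN,
\]
which is exactly \eqref{eq:Req}.

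\textbf{Step 3: initial data and closure.} The initial condition follows from $M(0)=I$, $N(0)=\bar F(0)=u_0$, so $\mathcal R(0)=u_0-u_0=0$. For closure, note that $M$ solves the linear equation $\mathcal D_uM+MG=0$, whose coefficients depend only on $u$. Likewise $N$ solves a linear equation depending only on $u$, and \eqref{eq:Req} is linear in $\mathcal R$ with a forcing built from $(M,N)$. The whole system therefore involves only $(u,M,N,\mathcal R)$ and no second moments such as $\E[J\otimes J]$.

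\textbf{Where the work lies.} The only genuinely delicate point is upstream: justifying \eqref{eq:MNF}, and in particular the third equation for $\bar F$. For that one must check that the It\^o correction in $\dd(J_t^\top V_t)$ has the right form. From \eqref{eq:SPDEA}, $J$ and $V$ each carry noise $-\sqrt{2\nu}\,\partial_k(\cdot)\dd W^k$. Their cross-variation therefore contributes $2\nu\sum_k(\partial_kJ)^\top\partial_kV\,\dd t$, which combines with the two Laplacians into $\nu\Delta(J^\top V)$. This is the same mechanism as \eqref{eq:prodrule} with opposite sign, and it is precisely why the covariation ``collapses into a Laplacian.'' The discrepancy between the exact Laplacian in the $\bar F$ equation and the deterministic product rule for $M^\top N$ is what leaves the source term in \eqref{eq:Req}.

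Since the paper states \eqref{eq:MNF} in Section~\ref{sec:prelim}, I would invoke it directly, add only a brief remark recalling this cancellation, and note that the martingale terms vanish in expectation by \eqref{eq:JKbound}.
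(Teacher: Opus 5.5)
Your proposal is correct and follows the paper's proof exactly: apply \eqref{eq:prodrule} with $P=M$, $q=N$, substitute $\mathcal D_uM=-MG$ and $\mathcal D_uN=0$ from \eqref{eq:MNF}, subtract the result from $\mathcal D_u\bar F=-G^\top\bar F$, and obtain $\mathcal R(0)=u_0-Iu_0=0$. Your upstream check is also correct: the cross-variation $2\nu\sum_k(\partial_kJ)^\top\partial_kV\,\dd t$ combines with the two Laplacians into $\nu\Delta(J^\top V)$, which is the mechanism the paper relies on for the third equation in \eqref{eq:MNF}.
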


\begin{proof}
Apply \eqref{eq:prodrule} with $P=M$ and $q=N$, using $\mathcal D_uM=-MG$ and
$\mathcal D_uN=0$ from \eqref{eq:MNF}:
\[
  \mathcal D_u(M^\top N)=-G^\top M^\top N-2\nu\sum_k(\partial_kM)^\top(\partial_kN).
\]
Subtracting this from $\mathcal D_u\bar F=-G^\top\bar F$ and writing
$\mathcal R=\bar F-M^\top N$ gives \eqref{eq:Req}. At $t=0$ all fields are deterministic,
with $M(0)=I$, $N(0)=u_0$ and $\bar F(0)=u_0$, so $\mathcal R(0)=0$.
\end{proof}

\begin{remark}
One might expect the covariance of a nonlinear functional of the flow to couple to third
moments, launching the usual infinite hierarchy. It does not. For this particular
covariance the quadratic covariation generated by the \emph{common} translational noise
combines with the two Laplacians into exactly $\nu\Delta$, and the hierarchy terminates at
once. This is a structural feature of the Constantin--Iyer representation --- every label
rides the same Brownian path --- and is the reason the computation must be carried out at
the level of the SPDE before any closure hypothesis is contemplated.
\end{remark}

\subsection{The deformation-variance tensor}

Let $\widetilde J:=J-M$ and
\begin{equation}\label{eq:Qdef}
  Q:=\E[\widetilde J^\top\widetilde J]=\E[J^\top J]-M^\top M,
\end{equation}
a symmetric positive-semidefinite tensor field with
$\tr Q=\E\abs{\widetilde J}_{\mathrm F}^2$.

\begin{theorem}[Closed variance equation]\label{thm:Qeq}
The tensor $Q$ solves
\begin{equation}\label{eq:Qeq}
  \mathcal D_uQ+G^\top Q+QG=2\nu\sum_{k=1}^3(\partial_kM)^\top(\partial_kM),\qquad Q(0)=0,
\end{equation}
and, with $S:=\tfrac12(G+G^\top)$ the symmetric strain,
\begin{equation}\label{eq:trQ}
  \mathcal D_u\tr Q+2\tr(SQ)=2\nu\abs{\nabla M}_{\mathrm F}^2.
\end{equation}
\end{theorem}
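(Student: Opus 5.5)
Following the proof of Theorem~\ref{thm:Req}, I will first derive a deterministic equation for the second moment $E:=\E[J^\top J]$ and then subtract the equation for $M^\top M$, which follows from \eqref{eq:MNF} via a matrix analogue of the product rule \eqref{eq:prodrule}.

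\textbf{Equation for $E$.} The mean equation $\mathcal D_uM+MG=0$ cannot be reused here, since $J^\top J$ is quadratic in the random field $J$. I will work with the SPDE for $J$, obtained by differentiating \eqref{eq:SPDEA}:
\[
\dd J+\bigl((u\cdot\nabla)J+JG-\nu\Delta J\bigr)\dd t+\sqrt{2\nu}\,\sum_k\partial_kJ\,\dd W^k_t=0 .
\]
Applying It\^o's formula to $J^\top J$ produces three kinds of terms. The drift terms give $-(u\cdot\nabla)(J^\top J)-G^\top J^\top J-J^\top JG+\nu\bigl((\Delta J)^\top J+J^\top\Delta J\bigr)$. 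The quadratic covariation gives $2\nu\sum_k(\partial_kJ)^\top(\partial_kJ)$. By the Leibniz identity $\Delta(J^\top J)=(\Delta J)^\top J+J^\top\Delta J+2\sum_k(\partial_kJ)^\top\partial_kJ$, these combine into exactly $\nu\Delta(J^\top J)$; this is the same cancellation used for Theorem~\ref{thm:Req}.

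The martingale term $\sqrt{2\nu}\,\sum_k\partial_k(J^\top J)\,\dd W^k$ has zero mean, because the pathwise bounds \eqref{eq:JKbound} and their higher-order analogues make it a genuine martingale. Taking expectations therefore gives
\[
\mathcal D_uE+G^\top E+EG=0,\qquad E(0)=I .
\]

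\textbf{Equation for $M^\top M$.} Since $M$ is deterministic, the Leibniz rule gives
\[
\mathcal D_u(M^\top M)=(\mathcal D_uM)^\top M+M^\top\mathcal D_uM-2\nu\sum_k(\partial_kM)^\top\partial_kM .
\]
Substituting $\mathcal D_uM=-MG$ turns this into
\[
\mathcal D_u(M^\top M)=-G^\top M^\top M-M^\top MG-2\nu\sum_k(\partial_kM)^\top\partial_kM .
\]
Subtracting this from the equation for $E$ yields \eqref{eq:Qeq}. The initial condition is $Q(0)=I-I=0$, since $J_0=I$ is deterministic.

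\textbf{Trace equation.} I take the trace of \eqref{eq:Qeq}. The trace commutes with $\mathcal D_u$. Cyclicity of the trace and the symmetry of $Q$ give $\tr(G^\top Q+QG)=2\tr(QG)=\tr\bigl(Q(G+G^\top)\bigr)=2\tr(SQ)$. On the right-hand side, $\tr\sum_k(\partial_kM)^\top\partial_kM=\sum_k|\partial_kM|_{\mathrm F}^2=|\nabla M|_{\mathrm F}^2$. Together these give \eqref{eq:trQ}.

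\textbf{Main obstacle.} The delicate step is the It\^o computation: I must check that the noise coefficient of $J$ is $\sqrt{2\nu}\,\partial_kJ$ with no zero-order part, so that the covariation term is exactly $2\nu\sum_k(\partial_kJ)^\top\partial_kJ$. This holds because the noise in \eqref{eq:SPDEA} is a pure translation that commutes with $\nabla$. Justifying that the expectation of the martingale vanishes is routine given the preliminaries.
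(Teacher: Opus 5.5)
Your proposal is correct and follows the paper's proof. Both use an It\^o computation showing that $\E[J^\top J]$ solves $\mathcal D_uE+G^\top E+EG=0$, with the covariation $2\nu\sum_k(\partial_kJ)^\top\partial_kJ$ and the two Laplacians merging into $\nu\Delta(J^\top J)$; both then subtract the deterministic product-rule equation for $M^\top M$ and take the trace using the symmetry of $Q$. The martingale term should carry a minus sign, $-\sqrt{2\nu}\sum_k\partial_k(J^\top J)\,\dd W^k_t$, but this is immaterial since it has zero mean.
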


\begin{proof}
Set $H:=\E[J^\top J]$. In the It\^o product rule for $J^\top J$ the noise quadratic
covariation combines with the Laplacians into $\nu\Delta(J^\top J)$, exactly as in the
derivation of \eqref{eq:MNF}, so that $\mathcal D_uH+G^\top H+HG=0$. On the other hand
\eqref{eq:prodrule}, applied columnwise with $P=q=M$ and combined with
$\mathcal D_uM=-MG$, gives
\[
  \mathcal D_u(M^\top M)=-G^\top M^\top M-M^\top MG
  -2\nu\sum_k(\partial_kM)^\top(\partial_kM).
\]
Subtracting yields \eqref{eq:Qeq}. Taking traces and using
$\tr(G^\top Q+QG)=2\tr(SQ)$ gives \eqref{eq:trQ}.
\end{proof}

\subsection{Energy and production balance}

\begin{proposition}[Covariance energy balance]\label{prop:balance}
Testing \eqref{eq:Req} against $\mathcal R$ and integrating over $\T^3$, using
$\diver u=0$, gives
\begin{equation}\label{eq:balance}
  \tfrac12\tfrac{\dd}{\dd t}\norm{\mathcal R}_{L^2}^2
  +\nu\norm{\nabla\mathcal R}_{L^2}^2
  +\int_{\T^3}\mathcal R^\top S\,\mathcal R
  =2\nu\int_{\T^3}\mathcal R\cdot\sum_k(\partial_kM)^\top(\partial_kN).
\end{equation}
The antisymmetric part of $G$ makes no contribution, since
$r^\top(G-G^\top)r=0$ for every vector $r$.
\end{proposition}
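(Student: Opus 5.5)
The plan is to take the Euclidean inner product of \eqref{eq:Req} with $\mathcal R$ pointwise and integrate over $\T^3$, treating each term of $\mathcal D_u\mathcal R+G^\top\mathcal R$ separately. All fields are smooth on $\T^3\times[0,T]$ by the standing assumptions, so the integrations by parts are legitimate and there are no boundary terms on the torus.

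The time-derivative term gives $\int\mathcal R\cdot\partial_t\mathcal R=\tfrac12\tfrac{\dd}{\dd t}\norm{\mathcal R}_{L^2}^2$. For the transport term I write $\mathcal R\cdot(u\cdot\nabla)\mathcal R=\tfrac12 u\cdot\nabla\abs{\mathcal R}^2$. Integrating by parts then gives $-\tfrac12\int(\diver u)\abs{\mathcal R}^2=0$. For the diffusion term, $-\nu\int\mathcal R\cdot\Delta\mathcal R=\nu\sum_k\int\abs{\partial_k\mathcal R}^2=\nu\norm{\nabla\mathcal R}_{L^2}^2$, again by one integration by parts. The right-hand side of \eqref{eq:Req} is simply carried over as is.

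For the zero-order term, $\mathcal R\cdot G^\top\mathcal R=\mathcal R^\top G^\top\mathcal R$. I split $G^\top=S-\tfrac12(G-G^\top)$. Because $G-G^\top$ is antisymmetric, $r^\top(G-G^\top)r=0$ for every vector $r$, so only $\mathcal R^\top S\mathcal R$ survives. This is exactly the final sentence of the statement, and it yields the production term $\int\mathcal R^\top S\mathcal R$.

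Collecting the four terms gives \eqref{eq:balance}. No step is a genuine obstacle. The only point needing care is to apply the vector conventions consistently: $\mathcal R$ is a vector field, $(\partial_kM)^\top(\partial_kN)$ is a vector, and $\nabla\mathcal R$ is measured in the Frobenius norm. One should also confirm that smoothness up to $t=0$ permits differentiating $\norm{\mathcal R}_{L^2}^2$ under the integral, which is immediate from the standing assumptions.
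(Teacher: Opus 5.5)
Your proposal is correct and takes the same route the paper indicates. You pair \eqref{eq:Req} with $\mathcal R$ and integrate over $\T^3$; the transport term vanishes by $\diver u=0$, integrating the Laplacian by parts gives $\nu\norm{\nabla\mathcal R}_{L^2}^2$, and antisymmetry of $G-G^\top$ reduces $\mathcal R^\top G^\top\mathcal R$ to $\mathcal R^\top S\mathcal R$.
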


Identity \eqref{eq:balance} corrects a tempting misconception. The Brownian forcing does
not simply destroy covariance: it dissipates $\mathcal R$ through
$\nu\norm{\nabla\mathcal R}^2$ while simultaneously \emph{producing} covariance through
the gradient-of-means source on the right, because a product of means does not evolve like
the mean of a product. The strain term has no definite sign.

\subsection{One- and two-point covariance bounds}

\begin{proposition}[Covariance bounds]\label{prop:covbounds}
Let $\sigma_J:=\sup_x(\tr Q(x))^{1/2}$ and
$L_J:=\sup_x(\E\abs{J(x)}_{\mathrm F}^2)^{1/2}\le\sigma_J+\sqrt3\,\norm M_{L^\infty}$, and
for $0<\beta\le1$ set
\[
  \mathsf H_{J,\beta}:=\sup_{x\ne y}
  \frac{\bigl(\E\abs{\widetilde J(x)-\widetilde J(y)}_{\mathrm F}^2\bigr)^{1/2}}{d(x,y)^\beta},
\]
where $d$ is the geodesic distance on $\T^3$. Then
$\norm{\mathcal R}_{L^\infty}\le\norm{u_0}_{L^\infty}\sigma_J$, and there is a dimensional
constant $C$ such that
\begin{equation}\label{eq:Rholder}
  \norm{\mathcal R}_{C^\beta}\le C\Big[
  \norm{u_0}_{L^\infty}(\sigma_J+\mathsf H_{J,\beta})
  +\norm{\nabla u_0}_{L^\infty}\,\sigma_J\bigl(\sigma_J+\sqrt3\,\norm M_{L^\infty}\bigr)\Big].
\end{equation}
\end{proposition}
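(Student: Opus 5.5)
The plan is to work entirely from the centred representation $\mathcal R(x)=\E[\widetilde J(x)^\top\widetilde V(x)]$ with $\widetilde V:=V-N$. Each estimate will come from Cauchy--Schwarz applied pointwise in $x$, or for each pair $(x,y)$, as allowed by Remark~\ref{rem:meas}. Because $\widetilde J$ has mean zero, we may replace $\widetilde V$ by $V$ freely: $\E[\widetilde J^\top N]=M^\top$-free, indeed $\E[\widetilde J]^\top N=0$. Hence
\[
\mathcal R(x)=\E[\widetilde J(x)^\top V(x)].
\]
For the $L^\infty$ bound we use $\abs{V(x)}=\abs{u_0(A_t(x))}\le\norm{u_0}_{L^\infty}$ almost surely. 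This gives
\[
\abs{\mathcal R(x)}\le\E\bigl[\abs{\widetilde J(x)}_{\mathrm F}\abs{V(x)}\bigr]\le\norm{u_0}_{L^\infty}(\tr Q(x))^{1/2}\le\norm{u_0}_{L^\infty}\sigma_J,
\]
where we used Jensen/Cauchy--Schwarz and the fact that operator norm is at most the Frobenius norm.

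For the H\"older seminorm, fix $x\ne y$ and split
\[
\mathcal R(x)-\mathcal R(y)=\E\bigl[(\widetilde J(x)-\widetilde J(y))^\top V(x)\bigr]+\E\bigl[\widetilde J(y)^\top(V(x)-V(y))\bigr].
\]
The first term is bounded by $\norm{u_0}_{L^\infty}\mathsf H_{J,\beta}\,d(x,y)^\beta$ exactly as above.

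For the second term we use the pathwise Lipschitz property of $V=u_0\circ A_t$. By the mean value inequality along a geodesic (lifted to $\R^3$), $\abs{V(x)-V(y)}\le\norm{\nabla u_0}_{L^\infty}\sup_z\abs{J(z)}\,d(x,y)$. Here is the main obstacle: the supremum over $z$ lies inside the expectation, while the statement only offers the pointwise quantity $L_J$. So I instead write the increment as $\int_0^1\nabla u_0(A_t(\gamma(s)))J(\gamma(s))\dot\gamma(s)\dd s$ along the segment $\gamma$ from $y$ to $x$. Then Cauchy--Schwarz in $\omega$ and Minkowski in $s$ give
\[
\abs{\E[\widetilde J(y)^\top(V(x)-V(y))]}\le\norm{\nabla u_0}_{L^\infty}\,\sigma_J\,\sup_s\bigl(\E\abs{J(\gamma(s))}_{\mathrm F}^2\bigr)^{1/2}d(x,y)\le\norm{\nabla u_0}_{L^\infty}\sigma_J L_J\,d(x,y).
\]
Inserting the stated bound on $L_J$ yields the factor $\sigma_J(\sigma_J+\sqrt3\norm M_{L^\infty})$. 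The bound on $L_J$ itself follows from $\E\abs J_{\mathrm F}^2=\tr Q+\abs M_{\mathrm F}^2$ and $\abs M_{\mathrm F}\le\sqrt3\norm M$.

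Finally, for $d(x,y)\le1$ we have $d^1\le d^\beta$. For pairs at large distance (on $\T^3$, $d\le\sqrt3/2$ anyway), we fall back on $2\norm{\mathcal R}_{L^\infty}$. This is where the $\norm{u_0}_{L^\infty}\sigma_J$ term and the dimensional constant $C$ come from. Summing the $L^\infty$ norm and the seminorm gives \eqref{eq:Rholder}.
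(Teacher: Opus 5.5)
Your proof is correct and follows essentially the paper's route: pointwise Cauchy--Schwarz, the same two-term split of $\mathcal R(x)-\mathcal R(y)$, and writing $V(x)-V(y)$ as an integral along a geodesic so that Minkowski keeps the supremum outside the expectation and only $L_J$ appears. The one difference is cosmetic. You use $\E\widetilde J=0$ to replace $\widetilde V$ by $V$ and bound $\abs{V}\le\norm{u_0}_{L^\infty}$ pathwise, whereas the paper keeps $\widetilde V$ and uses that centring decreases second moments; also, your large-distance fallback is unnecessary, since $d\le\sqrt3/2<1$ on $\T^3$ already gives $d\le d^\beta$.
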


\begin{proof}
Write $\widetilde V:=V-N$. For fixed $x$, Cauchy--Schwarz on the probability space gives
$\abs{\mathcal R(x)}\le(\E\abs{\widetilde J(x)}_{\mathrm F}^2)^{1/2}
(\E\abs{\widetilde V(x)}^2)^{1/2}$. The first factor equals $(\tr Q(x))^{1/2}$, and since
centring decreases second moments,
$\E\abs{\widetilde V}^2=\E\abs V^2-\abs N^2\le\norm{u_0}_{L^\infty}^2$; this proves the
$L^\infty$ bound. For increments, split
\[
  \mathcal R(x)-\mathcal R(y)
  =\E\bigl[(\widetilde J(x)-\widetilde J(y))^\top\widetilde V(x)\bigr]
  +\E\bigl[\widetilde J(y)^\top(\widetilde V(x)-\widetilde V(y))\bigr].
\]
The first term is at most $\norm{u_0}_{L^\infty}\mathsf H_{J,\beta}\,d(x,y)^\beta$ by
Cauchy--Schwarz. For the second, join $x$ to $y$ by a minimizing geodesic $\gamma$; since
$V=u_0\circ A$ and $\nabla A=J$,
\[
  \abs{V(x)-V(y)}\le\norm{\nabla u_0}_{L^\infty}\int_0^1\abs{J(\gamma(s))}\,\abs{\dot\gamma(s)}\dd s ,
\]
and Minkowski's integral inequality together with centring gives
\[
  \bigl(\E\abs{\widetilde V(x)-\widetilde V(y)}^2\bigr)^{1/2}
  \le\norm{\nabla u_0}_{L^\infty}\,L_J\,d(x,y) .
\]
Combining the two terms, using $d(x,y)\le C\,d(x,y)^\beta$ on the compact torus and
$L_J\le\sigma_J+\sqrt3\norm M_{L^\infty}$, yields \eqref{eq:Rholder}.
\end{proof}

\begin{remark}[One-point information is not enough]\label{rem:onepoint}
The tensor $Q(x)$ records no cross-covariance between $J(x)$ and $J(y)$, and this loss is
real, not an artifact of the proof. On $\T$, take $\Theta$ uniform on $[0,2\pi]$,
$K=2\pi n$, and
\[
  \widetilde J_K(y)=\cos(Ky+\Theta)\,e_1\otimes e_2,\qquad
  \widetilde V_K(y)=\cos(Ky+\Theta)\cos(Ky)\,e_1 .
\]
Then $Q_K=\tfrac12\,e_2\otimes e_2$ for every $K$, while
$\mathcal R_K(y)=\tfrac12\cos(Ky)\,e_2$, so $[\mathcal R_K]_\beta\simeq K^\beta$ is
unbounded. Any H\"older bound on $\mathcal R$ must therefore involve a genuinely two-point
statistic such as $\mathsf H_{J,\beta}$.
\end{remark}

\section{The two-point covariance and its degeneracy}\label{sec:twopoint}

Define
\[
  \mathcal C(t,x,y):=\E\bigl[(J(t,x)-M(t,x))^\top(V(t,y)-N(t,y))\bigr],
  \qquad \mathcal C(t,x,x)=\mathcal R(t,x),
\]
and the joint common-noise operator
\[
  \mathcal L_u^{(2)}:=\partial_t+u(t,x)\cdot\nabla_x+u(t,y)\cdot\nabla_y
  -\nu(\nabla_x+\nabla_y)^2 .
\]

\begin{theorem}[Two-point covariance equation]\label{thm:twopoint}
For a smooth solution,
\begin{equation}\label{eq:twopoint}
  \mathcal L_u^{(2)}\mathcal C+G(t,x)^\top\mathcal C
  =2\nu\sum_{k=1}^3(\partial_{x_k}M(t,x))^\top(\partial_{y_k}N(t,y)),\qquad
  \mathcal C(0,\cdot,\cdot)=0,
\end{equation}
and on the diagonal $y=x$ equation \eqref{eq:twopoint} reduces exactly to \eqref{eq:Req}.
\end{theorem}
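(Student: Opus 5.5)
We mirror the one-point proof of Theorem~\ref{thm:Req}, working on the product space $\T^3_x\times\T^3_y$. The plan is to derive an evolution equation for the two-point second moment $\mathcal F(t,x,y):=\E[J(t,x)^\top V(t,y)]$ and subtract the equation satisfied by the product of means $M(t,x)^\top N(t,y)$.

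\textbf{Step 1: the two-point moment.} From \eqref{eq:SPDEA}, each of $J$ and $V$ satisfies an SPDE of the form
\[
\dd Z+(u\cdot\nabla)Z\dd t-\nu\Delta Z\dd t+\sqrt{2\nu}\,\nabla Z\dd W_t=(\text{zero order})\dd t .
\]
For $J$ the zero-order term is $-JG$ and for $V$ it is $0$. Apply the It\^o product rule to $J(t,x)^\top V(t,y)$ with $x,y$ fixed. The drift collects:
\begin{itemize}
\item the transport terms $-u(x)\cdot\nabla_x$ and $-u(y)\cdot\nabla_y$;
\item the Laplacians $\nu\Delta_x+\nu\Delta_y$;
\item the zero-order term $-G(x)^\top J(x)^\top V(y)$, using $(JG)^\top=G^\top J^\top$;
\item the quadratic covariation $2\nu\sum_k(\partial_{x_k}J(x))^\top(\partial_{y_k}V(y))$.
\end{itemize}
The covariation appears because the \emph{same} $W$ drives both points. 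It equals $2\nu\,\nabla_x\cdot\nabla_y$ applied to the product, so it combines with the two Laplacians into $\nu(\nabla_x+\nabla_y)^2$. Taking expectations, the martingale terms vanish by the pathwise bounds \eqref{eq:JKbound} and their higher-order analogues. This gives
\[
\mathcal L_u^{(2)}\mathcal F+G(x)^\top\mathcal F=0 .
\]
Setting $y=x$ recovers the third equation of \eqref{eq:MNF}.

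\textbf{Step 2: the product of means.} For the deterministic fields $M(t,x)$ and $N(t,y)$, the two-point analogue of \eqref{eq:prodrule} holds. Since $M(x)^\top N(y)$ has separate variables, $\Delta_x$ and $\Delta_y$ act on one factor each. The cross term $2\nu\nabla_x\cdot\nabla_y$ in $\nu(\nabla_x+\nabla_y)^2$ then yields exactly $2\nu\sum_k(\partial_{x_k}M)^\top(\partial_{y_k}N)$, so
\[
\mathcal L_u^{(2)}(M^\top N)=(\mathcal D_uM)(x)^\top N(y)+M(x)^\top(\mathcal D_uN)(y)-2\nu\sum_k(\partial_{x_k}M)^\top(\partial_{y_k}N).
\]
Substituting $\mathcal D_uM=-MG$ and $\mathcal D_uN=0$ from \eqref{eq:MNF}, and then subtracting this from the equation of Step 1, gives \eqref{eq:twopoint} for $\mathcal C=\mathcal F-M(x)^\top N(y)$. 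The initial datum is $\mathcal C(0)=0$ because $J_0=I$ and $V_0=u_0$ are deterministic.

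\textbf{Step 3: restriction to the diagonal.} Take $g(x)=h(x,x)$. Then $\nabla g=\bigl((\nabla_x+\nabla_y)h\bigr)|_{y=x}$, and hence $\Delta g=\bigl((\nabla_x+\nabla_y)^2h\bigr)|_{y=x}$. Also $\bigl(u(x)\cdot\nabla_x+u(y)\cdot\nabla_y\bigr)h$ restricts to $u\cdot\nabla g$. So $\mathcal L^{(2)}_u$ restricts to $\mathcal D_u$, and the source restricts to that of \eqref{eq:Req}. This is precisely why the operator is written with $(\nabla_x+\nabla_y)^2$.

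\textbf{Main obstacle.} The computational steps are mechanical. The main obstacle is justifying the two-point It\^o--Wentzell calculus. This requires joint smoothness in $(x,y)$ of the random fields and true-martingale status of the stochastic integrals uniformly on the product torus, which follow from the pathwise higher-derivative bounds of Section~\ref{sec:prelim}. Everything is computed pointwise in $(x,y)$, in line with Remark~\ref{rem:meas}.
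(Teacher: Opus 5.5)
Your proposal is correct and follows the paper's proof essentially step for step. Both arguments apply the It\^o product rule to $J(t,x)^\top V(t,y)$, where the common-noise cross covariation combines with $\nu\Delta_x+\nu\Delta_y$ into $\nu(\nabla_x+\nabla_y)^2$; both then average, subtract the deterministic two-point product rule for $M(x)^\top N(y)$ (whose mixed derivatives give the source), and restrict to the diagonal using that $\nabla_x+\nabla_y$ is the total derivative. Your version adds explicit details the paper leaves implicit, such as the zero-order term $-G(x)^\top J(x)^\top V(y)$ and the check that transport restricts to $u\cdot\nabla$.
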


\begin{proof}
In the It\^o product rule for $J(t,x)^\top V(t,y)$ the cross quadratic covariation of the
common Brownian motion evaluated at the two points is
\[
  2\nu\sum_k(\partial_{x_k}J(t,x))^\top(\partial_{y_k}V(t,y))\dd t ,
\]
which combines with $\nu\Delta_x+\nu\Delta_y$ into $\nu(\nabla_x+\nabla_y)^2$. Averaging
gives, with $\mathcal F_2:=\E[J(x)^\top V(y)]$, the identity
$\mathcal L_u^{(2)}\mathcal F_2+G(x)^\top\mathcal F_2=0$. The deterministic product rule for
$M(x)^\top N(y)$, in which the mixed second derivatives produce precisely the source in
\eqref{eq:twopoint}, together with \eqref{eq:MNF}, gives the claim after subtraction. On
the diagonal, $\nabla_x+\nabla_y$ is the total derivative of $x\mapsto\mathcal C(x,x)$, and
\eqref{eq:twopoint} restricts to \eqref{eq:Req}.
\end{proof}

\begin{proposition}[No separation smoothing from common noise]\label{prop:degeneracy}
In centre and separation variables $c=\tfrac{x+y}{2}$, $r=x-y$,
\[
  (\nabla_x+\nabla_y)^2=\Delta_c,\qquad
  u(x)\cdot\nabla_x+u(y)\cdot\nabla_y
  =\frac{u_++u_-}{2}\cdot\nabla_c+(u_+-u_-)\cdot\nabla_r,
\]
with $u_\pm=u(t,c\pm r/2)$. The diffusion therefore acts on the centre variable only;
there is no $\Delta_r$. In particular the model equation
$\partial_tH-\nu(\nabla_x+\nabla_y)^2H=0$ admits every bounded profile
$H(t,x,y)=h(x-y)$ as a stationary solution, so no estimate can extract regularity in
$x-y$ from the common-noise diffusion alone.
\end{proposition}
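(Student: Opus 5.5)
The argument is a linear change of variables followed by the chain rule, so I would simply compute. Introduce $c=\tfrac{x+y}{2}$ and $r=x-y$, so that $x=c+r/2$ and $y=c-r/2$. For a function $H(x,y)=\widetilde H(c,r)$ the chain rule gives
\[
  \nabla_x=\tfrac12\nabla_c+\nabla_r,\qquad
  \nabla_y=\tfrac12\nabla_c-\nabla_r .
\]
Hence $\nabla_x+\nabla_y=\nabla_c$ as first-order operators. Squaring this constant-coefficient operator gives $(\nabla_x+\nabla_y)^2=\sum_k(\partial_{x_k}+\partial_{y_k})^2=\Delta_c$. Since $\nabla_r$ cancels identically, no $\Delta_r$ term and no mixed $\partial_c\partial_r$ term can appear.

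For the transport part, substitute the same formulas. This gives
\[
  u(x)\cdot\nabla_x+u(y)\cdot\nabla_y
  =u_+\cdot\bigl(\tfrac12\nabla_c+\nabla_r\bigr)+u_-\cdot\bigl(\tfrac12\nabla_c-\nabla_r\bigr),
\]
with $u_\pm=u(t,c\pm r/2)$ because $x=c+r/2$ and $y=c-r/2$. Regrouping yields $\tfrac{u_++u_-}{2}\cdot\nabla_c+(u_+-u_-)\cdot\nabla_r$, as stated.

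For the stationary profiles, let $H(t,x,y)=h(x-y)$ with $h$ bounded. In the new variables $\widetilde H=h(r)$ is independent of $c$ and $t$, so $\partial_t H=0$ and $\Delta_c H=0$. The model equation therefore holds. This can also be checked directly: $(\partial_{x_k}+\partial_{y_k})h(x-y)=\partial_kh-\partial_kh=0$.

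If $h$ is merely bounded rather than smooth, I would read the identity distributionally. For a test function $\varphi$, the adjoint of $\partial_{x_k}+\partial_{y_k}$ applied to $\varphi$ integrates to zero against $h(x-y)$, which follows from the change of variables in $(c,r)$. On $\T^3$ one should also note that $(c,r)$ is well defined only locally, or as a map from $\T^3\times\T^3$ to a double cover. Since the operators are local and the identity $\nabla_x+\nabla_y=\nabla_c$ holds locally, this causes no difficulty. The closing sentence ("no estimate can extract regularity in $x-y$") then follows because $h$ can be chosen arbitrarily rough, for instance discontinuous, while still solving the equation.

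The main obstacle is not computational. It is to phrase the torus geometry and the distributional meaning for nonsmooth $h$ cleanly.
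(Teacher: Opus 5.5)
Your proposal is correct and follows the paper's proof: the chain rule gives $\nabla_x=\tfrac12\nabla_c+\nabla_r$ and $\nabla_y=\tfrac12\nabla_c-\nabla_r$, so $\nabla_x+\nabla_y=\nabla_c$ and $(\nabla_x+\nabla_y)^2=\Delta_c$; the transport identity follows by regrouping, and $H=h(r)$ is annihilated by both $\partial_t$ and $\nabla_x+\nabla_y$. Your remarks on reading the equation distributionally for nonsmooth $h$, and on $(c,r)$ being defined only locally on the torus, go beyond what the paper records but change nothing essential; the globally defined coordinates $(r,y)=(x-y,y)$, in which $\nabla_x+\nabla_y$ becomes $\nabla_y$, would remove the torus issue altogether.
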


\begin{proof}
With $\nabla_x=\tfrac12\nabla_c+\nabla_r$ and $\nabla_y=\tfrac12\nabla_c-\nabla_r$ we get
$\nabla_x+\nabla_y=\nabla_c$, hence $(\nabla_x+\nabla_y)^2=\Delta_c$; the transport
identity is a direct computation. If $H=h(r)$ then $(\nabla_x+\nabla_y)H=0$ and
$\partial_tH=0$.
\end{proof}

This degeneracy is exact, not an artifact of estimation: it reflects the fact that a
single Brownian translation moves both points together and cannot mix the separation
variable. Any regularity mechanism for $\mathcal C$ in the variable $r$ must therefore come
from the transport increment $u_+-u_-$, that is, from a modulus of continuity of $u$
itself --- which is what one is typically trying to prove. We record this circularity here
because it disqualifies, at the root, a family of otherwise plausible kernel-smoothing
arguments.

\section{Energy control of the mean coordinate}\label{sec:caloric}

Choose an equivariant lift of $A_t$ to $\R^3$ and define the periodic mean displacement
\[
  \ell(t,x):=\E[A_t(x)-x],\qquad M=I+\nabla\ell .
\]
Averaging the SPDE \eqref{eq:SPDEA} and subtracting the trivial equation satisfied by the
coordinate function $x$ gives the exact identity
\begin{equation}\label{eq:caloric}
  \partial_t\ell+u\cdot\nabla\ell-\nu\Delta\ell=-u,\qquad \ell(0)=0 .
\end{equation}
The mean coordinate is thus a caloric coordinate driven by the velocity, and questions
about $M=\E\nabla A_t$ become questions about gradient regularity for the forced
drift-diffusion equation \eqref{eq:caloric}.

\begin{theorem}[Energy bound for the mean displacement]\label{thm:energy}
Assume $\int_{\T^3}u_0=0$; this mean is conserved by \eqref{eq:NS} on the torus, so
$\int_{\T^3}u(t)=0$ for all $t$. Let $C_P$ denote the Poincar\'e constant of $\T^3$. Then
for all $0<T<T_*$,
\begin{equation}\label{eq:energy}
  \norm{\ell(T)}_{L^2}^2+\nu\int_0^T\norm{M(t)-I}_{L^2}^2\dd t
  \le\frac{C_P^4}{2\nu^2}\norm{u_0}_{L^2}^2 .
\end{equation}
\end{theorem}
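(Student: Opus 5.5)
The plan is a standard energy estimate for the forced drift--diffusion equation \eqref{eq:caloric}, followed by the Navier--Stokes energy equality to control the forcing. Since the lift of $A_t$ is equivariant, $\ell$ is a smooth periodic field on $[0,T]\times\T^3$ for $T<T_*$. I would test \eqref{eq:caloric} against $\ell$ and integrate over $\T^3$. The transport term vanishes by $\diver u=0$, because $\int u\cdot\nabla\tfrac12\abs{\ell}^2=0$. The Laplacian gives $\nu\norm{\nabla\ell}_{L^2}^2=\nu\norm{M-I}_{L^2}^2$, using $M=I+\nabla\ell$ and the Frobenius norm pointwise. This yields
\[
  \tfrac12\tfrac{\dd}{\dd t}\norm{\ell}_{L^2}^2+\nu\norm{\nabla\ell}_{L^2}^2=-\int_{\T^3}u\cdot\ell .
\]

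The one point needing care is the forcing term, because $\ell$ need not have zero mean. Here the hypothesis $\int u(t)=0$ is used. Writing $\bar\ell(t)$ for the spatial average of $\ell$, I would replace $\ell$ by $\ell-\bar\ell$ without changing the integral. Cauchy--Schwarz, Poincar\'e and Young then give
\[
  \abs{\textstyle\int u\cdot\ell}\le C_P\norm{u}_{L^2}\norm{\nabla\ell}_{L^2}\le\tfrac\nu2\norm{\nabla\ell}_{L^2}^2+\tfrac{C_P^2}{2\nu}\norm{u}_{L^2}^2 .
\]
Absorbing the first term, multiplying by $2$, and integrating from $0$ to $T$ with $\ell(0)=0$, I obtain
\[
  \norm{\ell(T)}_{L^2}^2+\nu\int_0^T\norm{M-I}_{L^2}^2\dd t\le\frac{C_P^2}{\nu}\int_0^T\norm{u}_{L^2}^2\dd t .
\]

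It remains to bound $\int_0^T\norm u_{L^2}^2$ by the initial energy alone, with no dependence on $T$. This is where the $\nu^{-2}$ and $C_P^4$ come from. For a smooth solution the energy equality reads $\tfrac12\norm{u(T)}_{L^2}^2+\nu\int_0^T\norm{\nabla u}_{L^2}^2=\tfrac12\norm{u_0}_{L^2}^2$. Because $u(t)$ has zero mean, Poincar\'e gives $\norm{u}_{L^2}\le C_P\norm{\nabla u}_{L^2}$. Hence
\[
  \int_0^T\norm u_{L^2}^2\dd t\le C_P^2\int_0^T\norm{\nabla u}_{L^2}^2\dd t\le\frac{C_P^2}{2\nu}\norm{u_0}_{L^2}^2 .
\]
Substituting this gives exactly $\frac{C_P^4}{2\nu^2}\norm{u_0}_{L^2}^2$, uniformly in $T<T_*$.

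There is no serious obstacle. The only places where care is needed are the following:
\begin{itemize}
  \item the mean-zero reduction in the forcing term, where the unknown mean of $\ell$ must be discarded using $\int u=0$ rather than any control on $\bar\ell$;
  \item making sure the constant is $T$-independent, which is why one uses the dissipation integral $\int_0^T\norm{\nabla u}_{L^2}^2$ and not the cruder bound $\norm{u(t)}_{L^2}\le\norm{u_0}_{L^2}$, which would introduce a factor $T$.
\end{itemize}
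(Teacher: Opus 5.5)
Your proposal is correct and follows the paper's proof essentially step for step. You test \eqref{eq:caloric} against $\ell$, use Poincar\'e and Young to reach $\frac{\dd}{\dd t}\norm{\ell}_{L^2}^2+\nu\norm{\nabla\ell}_{L^2}^2\le\frac{C_P^2}{\nu}\norm{u}_{L^2}^2$, and then bound $\int_0^T\norm{u}_{L^2}^2$ by the energy identity combined with Poincar\'e. The only cosmetic difference is in handling the mean of $\ell$: the paper integrates \eqref{eq:caloric} over $\T^3$ to show that $\int\ell(t)=0$ for all $t$, so $\ell$ does in fact have zero mean, whereas you subtract $\bar\ell$ inside the forcing term using $\int u=0$; both are equally valid.
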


\begin{proof}
Integrating \eqref{eq:caloric} over $\T^3$ and using $\diver u=0$ together with
$\int u=0$ shows that $\int\ell(t)=0$ for all $t$. Testing \eqref{eq:caloric} against
$\ell$ gives
$\tfrac12\tfrac{\dd}{\dd t}\norm\ell_2^2+\nu\norm{\nabla\ell}_2^2=-\int u\cdot\ell$, and
Poincar\'e followed by Young's inequality yields
\[
  \tfrac{\dd}{\dd t}\norm\ell_2^2+\nu\norm{\nabla\ell}_2^2
  \le\frac{C_P^2}{\nu}\norm{u}_2^2 .
\]
The Navier--Stokes energy identity and Poincar\'e give
$\int_0^T\norm u_2^2\le C_P^2\int_0^T\norm{\nabla u}_2^2
\le\frac{C_P^2}{2\nu}\norm{u_0}_2^2$. Integrating in time and using
$M-I=\nabla\ell$ gives \eqref{eq:energy}.
\end{proof}

The bound \eqref{eq:energy} is unconditional: it holds up to the maximal time with a
constant depending only on the initial energy and $\nu$. What it does not give is a
pointwise or H\"older bound on $M$, and the next result shows that this limitation is
genuine even within exact solutions. Let $\Delta_\perp:=\partial_2^2+\partial_3^2$.

\begin{proposition}[Exact shear solutions: sharp positive-time regularity]\label{prop:shearM}
For $f_0\in L^2(\T^2)$ with zero mean, the field
\[
  u(t,x)=\bigl(e^{\nu t\Delta_\perp}f_0(x_2,x_3),\,0,\,0\bigr)
\]
is a global smooth solution of \eqref{eq:NS} for $t>0$, since $(u\cdot\nabla)u=0$. Its
mean displacement and mean deformation are
\begin{equation}\label{eq:shearMformula}
  \ell(t)=-t\,u(t),\qquad
  M(t)=I-t\,e_1\otimes\nabla_\perp e^{\nu t\Delta_\perp}f_0 ,
\end{equation}
and for every $0<\alpha<1$ and $\tau>0$,
\begin{equation}\label{eq:shearM}
  \sup_{t\ge\tau}\norm{M(t)-I}_{C^\alpha}
  \le C_\alpha\,\nu^{-1-\alpha/2}\,\tau^{-\alpha/2}\,\norm{f_0}_{L^2(\T^2)} .
\end{equation}
\end{proposition}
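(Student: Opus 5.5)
The proof has three parts: show the shear field solves \eqref{eq:NS}, solve \eqref{eq:caloric} explicitly, and estimate the result with heat-kernel bounds on $\T^2$.

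\textbf{Step 1: the shear is an exact solution.} Write $w(t):=e^{\nu t\Delta_\perp}f_0$, so $u=(w,0,0)$ with $w=w(t,x_2,x_3)$. Then $\diver u=\partial_1 w=0$ and $(u\cdot\nabla)u=w\,\partial_1 u=0$. Also $\partial_t u=\nu\Delta u$, since $\Delta w=\Delta_\perp w$. Hence \eqref{eq:NS} holds with $p\equiv0$. Smoothness for $t>0$ follows from the smoothing of the heat semigroup on $L^2(\T^2)$.

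\textbf{Step 2: the explicit mean displacement.} I would verify \eqref{eq:shearMformula} by substituting the ansatz $\ell=-t\,u$ into \eqref{eq:caloric}. The terms are:
\begin{itemize}
\item $\partial_t\ell=-u-t\,\partial_t u=-u-t\nu\Delta u$;
\item $u\cdot\nabla\ell=-t\,w\,\partial_1 u=0$;
\item $-\nu\Delta\ell=t\nu\Delta u$.
\end{itemize}
The sum is $-u$, and $\ell(0)=0$. Since $u$ is smooth only for $t>0$, I would first check the identity for smooth $f_0$. I would then pass to $L^2$ data by linearity in $f_0$ and continuity of both sides. For the uniqueness needed here, the transport term vanishes identically, so \eqref{eq:caloric} is just a forced heat equation.

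Next, $M=I+\nabla\ell$. Under the convention $(\nabla\ell)_{ij}=\partial_j\ell_i$, only the first row $\nabla\ell_1=-t\,\nabla w$ is nonzero. Because $w$ does not depend on $x_1$, this equals $-t(0,\partial_2 w,\partial_3 w)$, which is the displayed formula $M(t)-I=-t\,e_1\otimes\nabla_\perp w$.

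\textbf{Step 3: the H\"older estimate.} Two heat-kernel bounds on $\T^2$ are needed:
\[
\norm{\nabla e^{s\Delta_\perp}g}_{L^\infty}\lesssim s^{-1}\norm g_{L^2},\qquad
\norm{\nabla^2 e^{s\Delta_\perp}g}_{L^\infty}\lesssim s^{-3/2}\norm g_{L^2}\quad(0<s\le1),
\]
obtained from $L^2\to L^\infty$ smoothing of order $s^{-1/2}$ in dimension two plus one factor $s^{-1/2}$ per derivative. For large $s$ there is exponential decay $e^{-cs}$, since $f_0$ has zero mean and so the spectral gap applies. Interpolation between the two bounds gives
\[
[\nabla e^{s\Delta_\perp}g]_{C^\alpha}\lesssim s^{-1-\alpha/2}\norm g_{L^2}.
\]
With $s=\nu t$,
\[
\norm{M(t)-I}_{C^\alpha}\lesssim t\,(\nu t)^{-1-\alpha/2}\norm{f_0}_{L^2}=\nu^{-1-\alpha/2}t^{-\alpha/2}\norm{f_0}_{L^2},
\]
which is decreasing in $t$ and therefore bounded by its value at $t=\tau$. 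The $L^\infty$ part of the $C^\alpha$ norm gives $\nu^{-1}\norm{f_0}_{L^2}$.

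\textbf{Main obstacle.} This last $L^\infty$ term has no $\tau^{-\alpha/2}$ factor, so it is not dominated by the right side of \eqref{eq:shearM} when $\tau$ is large or $\nu$ is small. I would handle this with the exponential decay for $\nu t\ge1$, uniformly in $t\ge\tau$: there $t\,e^{-c\nu t}\lesssim\nu^{-1}$, and the extra factor $(\nu\tau)^{-\alpha/2}$ then comes from working in the regime $\nu t\gtrsim1$. In the complementary regime $\nu\tau<1$, the factor $(\nu\tau)^{-\alpha/2}>1$ absorbs the $L^\infty$ term directly. Getting this bookkeeping to match the exact exponents is the one place I would need to be careful. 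If it does not close, I would interpret $C^\alpha$ in \eqref{eq:shearM} as the H\"older seminorm.
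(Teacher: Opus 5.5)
Your proposal is correct and follows the paper's proof: you check that the shear solves \eqref{eq:NS} with zero pressure, verify $\ell=-t\,u$ by direct substitution into \eqref{eq:caloric}, and apply the zero-mean heat-semigroup bound $\norm{\nabla_\perp e^{s\Delta_\perp}f_0}_{C^\alpha}\le C_\alpha s^{-1-\alpha/2}\norm{f_0}_{L^2}$ with $s=\nu t$, as the paper does. The ``main obstacle'' you flag closes as you sketched: $s^{-1}\le s^{-1-\alpha/2}$ for $s\le 1$, and for $s\ge1$ the spectral gap gives $e^{-cs}\lesssim s^{-1-\alpha/2}$, so the bound holds for the full $C^\alpha$ norm (not only the seminorm) for every $s>0$, and you do not need to reinterpret $C^\alpha$.
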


\begin{proof}
Since $\partial_tu=\nu\Delta u$ and $u\cdot\nabla u=0$ with $\nabla p=0$, $u$ solves
\eqref{eq:NS}. Substituting $\ell=-tu$ into \eqref{eq:caloric}: $u$ has no $x_1$
dependence, so $u\cdot\nabla\ell=0$, and
$\partial_t(-tu)-\nu\Delta(-tu)=-u-t(\partial_tu-\nu\Delta u)=-u$; this verifies
\eqref{eq:shearMformula}. The two-dimensional heat semigroup satisfies
$\norm{\nabla_\perp e^{\nu t\Delta_\perp}f_0}_{C^\alpha}
\le C_\alpha(\nu t)^{-1-\alpha/2}\norm{f_0}_{L^2}$ for zero-mean data; multiplying by $t$
and restricting to $t\ge\tau$ gives \eqref{eq:shearM}.
\end{proof}

\section{Obstructions}\label{sec:obstr}

Each result in this section closes a specific route. We state them in increasing order of
strength.

\subsection{Norm of the expectation versus expectation of the norm}

\begin{proposition}[Volume preservation gives no comparison]\label{prop:jensen}
For every $N\in\mathbb N$ there is a two-valued random smooth volume-preserving
diffeomorphism $A$ of $\T^3$, isotopic to the identity, with
\[
  \norm{\E\nabla A}_{C^\alpha}=1,\qquad
  \E\norm{\nabla A}_{C^\alpha}\ge c_\alpha N .
\]
Moreover, volume preservation places no lower bound on the smallest singular value of a
mean: the four rotations
$R_0=\mathrm{diag}(1,1,1)$, $R_1=\mathrm{diag}(1,-1,-1)$, $R_2=\mathrm{diag}(-1,1,-1)$,
$R_3=\mathrm{diag}(-1,-1,1)$ all lie in $SO(3)$, yet $\tfrac14\sum_jR_j=0$.
\end{proposition}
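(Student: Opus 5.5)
The plan is to build $A$ as a random shear with two equally likely outcomes whose gradients cancel in the mean. Fix $N$ and a smooth $1$-periodic function of zero mean, and set $\phi_N(x_2):=N^{-1}\sin(2\pi N x_2)$. Consider the shear diffeomorphisms
\[
  A_\pm(x)=\bigl(x_1\pm\phi_N(x_2),\,x_2,\,x_3\bigr),
\]
each chosen with probability $\tfrac12$. Each $A_\pm$ is a smooth diffeomorphism of $\T^3$, with inverse $x\mapsto(x_1\mp\phi_N(x_2),x_2,x_3)$. It is volume preserving because
\[
  \nabla A_\pm=I\pm\phi_N'(x_2)\,e_1\otimes e_2
\]
is unipotent, so $\det\nabla A_\pm=1$. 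It is isotopic to the identity through $s\mapsto(x_1\pm s\phi_N(x_2),x_2,x_3)$, $s\in[0,1]$, and each stage of this isotopy is again a volume-preserving shear.

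Averaging the two outcomes gives $\E\nabla A=I$. The $C^\alpha$ norm of a constant matrix is its sup-norm part, so $\norm{\E\nabla A}_{C^\alpha}=\norm I=1$, provided the $C^\alpha$ norm is normalised as $\norm\cdot_{L^\infty}+[\cdot]_\alpha$. Since the deterministic part cancels, I will note this convention explicitly.

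For the lower bound, both realisations share the off-diagonal entry $\pm2\pi\cos(2\pi Nx_2)$. Its $C^\alpha$ seminorm is at least of order $N^\alpha$, which is checked by comparing the points $x_2=0$ and $x_2=1/(2N)$. This alone gives only $\E\norm{\nabla A}_{C^\alpha}\gtrsim N^\alpha$, which is weaker than the claimed $c_\alpha N$. I therefore rescale and take $\phi_N(x_2)=N^{-\alpha}\sin(2\pi N x_2)$. Then $\phi_N'=2\pi N^{1-\alpha}\cos(2\pi N x_2)$, and its $C^\alpha$ seminorm is at least $c\,N^{1-\alpha}N^\alpha=cN$, while the mean is still exactly $I$. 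The sup norm of $\nabla A$ grows like $N^{1-\alpha}$, which is harmless. The map remains a diffeomorphism for every amplitude, because shears $x_1\mapsto x_1+\phi(x_2)$ are invertible whatever the size of $\phi$. I expect this choice of scaling to be the only delicate point; if one prefers to keep $\norm{\nabla A}_{L^\infty}$ bounded, one can instead use amplitude $N^{-1}$ and frequency $N^{1/\alpha}$.

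The rotation claim is a direct check. Each $R_j$ is diagonal with entries $\pm1$ and an even number of minus signs, so $R_j^\top R_j=I$ and $\det R_j=1$. Summing the diagonals gives $(1+1-1-1)=0$ in each slot, so $\tfrac14\sum_jR_j=0$. If a diffeomorphism interpretation is wanted, the constant rotations act linearly on $\R^3$, and the $R_j$ preserve the lattice $\mathbb Z^3$ since they are diagonal with entries $\pm1$. They therefore descend to volume-preserving diffeomorphisms of $\T^3$ whose gradients average to $0$, so the mean has smallest singular value $0$.
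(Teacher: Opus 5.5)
Your proof is correct and is essentially the paper's argument. The paper uses the same two-valued shear $A_\pm$ with $\E\nabla A=I$, but with amplitude $\tfrac{N}{2\pi}$ at unit frequency, so that already $\norm{\nabla A_\pm}_{L^\infty}\ge N$ and the lower bound is immediate without any rescaling; your choice $N^{-\alpha}\sin(2\pi Nx_2)$ instead puts the growth in the H\"older seminorm, which is also valid since $[\nabla A_\pm]_\alpha\ge 4\pi 2^\alpha N$.

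One slip is in your optional remark: amplitude $N^{-1}$ at frequency $N^{1/\alpha}$ gives $\norm{\nabla A}_{L^\infty}\sim N^{1/\alpha-1}\to\infty$. To keep the gradient bounded you need amplitude of order $N^{-1/\alpha}$ and an integer frequency of order $N^{1/\alpha}$ (e.g.\ $\lceil N^{1/\alpha}\rceil$), so that the shear descends to $\T^3$.
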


\begin{proof}
Take $A_\pm(x,y,z)=(x\pm\tfrac{N}{2\pi}\sin(2\pi y),\,y,\,z)$, each with probability
$\tfrac12$. Then $\nabla A_\pm=I\pm N\cos(2\pi y)\,e_1\otimes e_2$ and
$\det\nabla A_\pm=1$, while $\E\nabla A_\pm=I$ and
$\E\norm{\nabla A_\pm}_{C^\alpha}\ge c_\alpha N$. The statement about rotations is a direct
check.
\end{proof}

The maps in Proposition~\ref{prop:jensen} are chosen by hand. The next result shows the
same separation inside a genuine stochastic flow of smooth divergence-free drifts, which
is what rules out ``mean-only'' control at the level of stochastic flows rather than
merely at the level of random fields. We first isolate the required moment estimates.

\begin{lemma}[Oscillatory time-integral moments]\label{lem:Z}
Let $B_t=\sqrt{2\nu}\,\beta_t$ for a standard one-dimensional Brownian motion $\beta$, let
$K>0$, $\lambda:=\nu K^2$, and set $Z_t:=\int_0^t e^{iKB_s}\dd s$. Then
\begin{equation}\label{eq:Z2}
  \E\abs{Z_t}^2=2\Bigl(\frac{t}{\lambda}-\frac{1-e^{-\lambda t}}{\lambda^2}\Bigr)
  \ \ge\ \frac{t}{\lambda}\quad\text{whenever }\lambda t\ge2 ,
\end{equation}
\begin{equation}\label{eq:Z4}
  \E\abs{Z_t}^4\le\frac{12\,t^2}{\lambda^2},
\end{equation}
and consequently, by the Paley--Zygmund inequality applied to $\abs{Z_t}^2$,
\begin{equation}\label{eq:Zlower}
  \Pp\Bigl\{\abs{Z_t}^2\ge\tfrac12\E\abs{Z_t}^2\Bigr\}\ge\frac1{48},
  \qquad
  \E\abs{Z_t}\ge c\,\sqrt{t/\lambda}\qquad(\lambda t\ge2)
\end{equation}
for a universal constant $c>0$.
\end{lemma}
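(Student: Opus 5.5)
The plan is to expand all moments of $Z_t$ as multiple time integrals of characteristic functions of Brownian increments. The key identity is $\E e^{iK(B_s-B_r)}=e^{-\lambda\abs{s-r}}$, since $K(B_s-B_r)$ is centred Gaussian with variance $2\nu K^2\abs{s-r}$.

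\emph{Second moment.} Write $\abs{Z_t}^2=\int_0^t\!\int_0^t e^{iK(B_s-B_r)}\dd r\dd s$. Taking expectations and using symmetry gives
\[
\E\abs{Z_t}^2=2\int_0^t\!\int_0^s e^{-\lambda(s-r)}\dd r\dd s=2\int_0^t\frac{1-e^{-\lambda s}}{\lambda}\dd s,
\]
which is exactly the formula in \eqref{eq:Z2}. For the lower bound, note that $(1-e^{-\lambda t})/\lambda^2\le 1/\lambda^2\le t/(2\lambda)$ when $\lambda t\ge2$. This leaves $2(t/\lambda-t/(2\lambda))=t/\lambda$.

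\emph{Fourth moment.} This is the main step. Write
\[
\abs{Z_t}^4=\int_{[0,t]^4}e^{iK(B_{s_1}+B_{s_2}-B_{r_1}-B_{r_2})}\,\dd s\,\dd r .
\]
Fix the ordering of the four times $\tau_1<\tau_2<\tau_3<\tau_4$ and assign each a sign $\epsilon_j\in\{+1,-1\}$, with two $+$ and two $-$. Rewrite the exponent as a sum over increments $B_{\tau_{j+1}}-B_{\tau_j}$, each weighted by the partial sum $\sigma_j=\sum_{i>j}\epsilon_i$, taken over the later times. These partial sums lie in $\{0,\pm1,\pm2\}$, and $\sigma_3=\pm1$ is never zero. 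Independence of increments then gives
\[
\E\abs{Z_t}^4=\sum\int e^{-\lambda(\sigma_1^2 g_1+\sigma_2^2 g_2+\sigma_3^2 g_3)},
\]
where the $g_j$ are the gaps between consecutive times. The bound goes as follows. The first time ranges over an interval of length at most $t$. The gap $g_3$ always carries $\sigma_3^2\ge1$, so integrating it out gives at most $1/\lambda$. Among $g_1,g_2$, at least one has nonzero $\sigma$: if $\sigma_2=0$ then $\sigma_1=\pm1$, because the total sum is zero. That gap gives another $1/\lambda$, and the remaining free gap gives at most $t$. Summing over the $6$ sign patterns and the factor from orderings, and organizing it as $4!$ orderings of labelled times divided appropriately, I expect a bound of the form $c\,t^2/\lambda^2$. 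The delicate part is to check that the count really produces the constant $12$. The alternative is to use the sharper estimate $\int_0^t\!\int_0^t e^{-\lambda\abs{s-r}}\le 2t/\lambda$ for paired gaps, in Wick-type form: the two pairings $(s_1,r_1)(s_2,r_2)$ and $(s_1,r_2)(s_2,r_1)$ each contribute $(2t/\lambda)^2=4t^2/\lambda^2$. The nonpaired configuration is controlled similarly, giving $3\cdot4=12$. If the exact constant resists, any universal constant suffices downstream after adjusting $1/48$ and $c$.

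\emph{Paley--Zygmund.} Apply Paley--Zygmund to $Y=\abs{Z_t}^2$ with $\theta=\tfrac12$:
\[
\Pp\{Y\ge\tfrac12\E Y\}\ge\tfrac14(\E Y)^2/\E Y^2 .
\]
For $\lambda t\ge2$, \eqref{eq:Z2} gives $(\E Y)^2\ge t^2/\lambda^2$, and \eqref{eq:Z4} gives $\E Y^2\le 12t^2/\lambda^2$. The ratio is therefore at least $1/48$. Finally,
\[
\E\abs{Z_t}\ge\sqrt{\tfrac12\E Y}\cdot\Pp\{Y\ge\tfrac12\E Y\}\ge\tfrac{1}{48\sqrt2}\sqrt{t/\lambda},
\]
which gives \eqref{eq:Zlower}.
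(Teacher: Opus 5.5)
Your second-moment computation and the Paley--Zygmund step coincide with the paper's. For the fourth moment you also use the paper's device: split $[0,t]^4$ into the $24$ orderings of the labelled times, and write the phase through independent gap increments weighted by $\sigma_j$.

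\textbf{The gap.} You never actually reach \eqref{eq:Z4}. You bound the first time and the free gap each by $t$, which gives $t^2/\lambda^2$ per ordering. Summed over the orderings this yields only $\E\abs{Z_t}^4\le 24t^2/\lambda^2$, and hence a Paley--Zygmund constant $1/96$, not the stated $12$ and $1/48$.

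\textbf{How to close it.} Two elementary observations suffice.
\begin{itemize}
\item Since $\sum_j\epsilon_j=0$, you have $\sigma_1=-\epsilon_1=\pm1$ in every ordering, not only when $\sigma_2=0$. So the integrand is always at most $e^{-\lambda(g_1+g_3)}$, and the middle gap $g_2$ is the free one.
\item After integrating $g_1$ and $g_3$ over $[0,\infty)$, the remaining variables still satisfy $\tau_1+g_2\le t$. They therefore range over a triangle of area $t^2/2$, not a square.
\end{itemize}
Each ordering then contributes at most $t^2/(2\lambda^2)$, and $24\cdot t^2/(2\lambda^2)=12t^2/\lambda^2$. This is exactly the paper's count.

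\textbf{Your Wick-type alternative.} The idea is good, but the count ``$3\cdot4=12$'' is not justified. For $e^{iK(B_{s_1}+B_{s_2}-B_{r_1}-B_{r_2})}$ there are only the two $\pm$ pairings you list, and there is no separate ``nonpaired'' configuration to add. What is true, and can be checked on the six sign patterns, is the pointwise bound
\[
\E\,e^{iK(B_{s_1}+B_{s_2}-B_{r_1}-B_{r_2})}\le e^{-\lambda\abs{s_1-r_1}-\lambda\abs{s_2-r_2}}+e^{-\lambda\abs{s_1-r_2}-\lambda\abs{s_2-r_1}} .
\]
To check it:
\begin{itemize}
\item If $\sigma_2=0$, then $\epsilon_3=-\epsilon_4$ and $\epsilon_1=-\epsilon_2$. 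The left side equals $e^{-\lambda(g_1+g_3)}$, which is the product for the pairing $(\tau_1,\tau_2)(\tau_3,\tau_4)$.
\item If $\sigma_2=\pm2$, the left side equals $e^{-\lambda(g_1+4g_2+g_3)}\le e^{-\lambda(g_1+2g_2+g_3)}$, and the right-hand exponential is the product for either pairing.
\end{itemize}
Each pairing product integrates to $(\E\abs{Z_t}^2)^2$. Hence $\E\abs{Z_t}^4\le 2(\E\abs{Z_t}^2)^2\le 8t^2/\lambda^2$, which is even stronger than \eqref{eq:Z4}.

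Either repair closes the gap. Your fallback, that any universal constant suffices downstream, is true for Proposition~\ref{prop:passiveshear}, but it does not prove the lemma as stated.
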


\begin{proof}
Since $\E e^{iK(B_s-B_r)}=e^{-\lambda\abs{s-r}}$,
\[
  \E\abs{Z_t}^2=\int_0^t\!\!\int_0^t e^{-\lambda\abs{s-r}}\dd r\dd s
  =2\Bigl(\frac{t}{\lambda}-\frac{1-e^{-\lambda t}}{\lambda^2}\Bigr),
\]
and the lower bound in \eqref{eq:Z2} follows since
$(1-e^{-\lambda t})/\lambda^2\le1/\lambda^2\le t/(2\lambda)$ when $\lambda t\ge2$. For
\eqref{eq:Z4}, expand
$\E\abs{Z_t}^4=\int_{[0,t]^4}\E\,e^{iK(B_{s_1}-B_{s_2}+B_{s_3}-B_{s_4})}$ and decompose the
cube into the $24$ orderings $r_1<r_2<r_3<r_4$ of the four times. In each ordering, writing
the phase in terms of the independent increments over the gaps $(r_1,r_2)$, $(r_2,r_3)$,
$(r_3,r_4)$, the coefficient of the first and last gap increments is $\pm1$ while the
middle coefficient lies in $\{0,\pm2\}$; hence
\[
  \abs{\E\,e^{iK(B_{s_1}-B_{s_2}+B_{s_3}-B_{s_4})}}
  \le e^{-\lambda(r_2-r_1)}e^{-\lambda(r_4-r_3)} .
\]
Integrating the right-hand side over the simplex and summing the $24$ orderings gives
$\E\abs{Z_t}^4\le 24\cdot\frac{t^2}{2}\cdot\frac1{\lambda^2}=\frac{12t^2}{\lambda^2}$.
Paley--Zygmund with the moment ratio
$(\E\abs{Z_t}^2)^2/\E\abs{Z_t}^4\ge\frac{(t/\lambda)^2}{12t^2/\lambda^2}=\frac1{12}$ and
threshold $\tfrac12$ gives $\Pp\{\abs{Z_t}^2\ge\tfrac12\E\abs{Z_t}^2\}\ge\frac14\cdot
\frac1{12}=\frac1{48}$, and \eqref{eq:Zlower} follows.
\end{proof}

\begin{proposition}[Separation in a genuine stochastic flow]\label{prop:passiveshear}
Fix $t_0>0$, $\nu>0$ and $0<\alpha<1$. For $K=2\pi n$, $n\in\mathbb N$, let
$a_K:=\nu K^{1-\alpha}$ and $b_K(x,y,z):=a_K\sin(Ky)\,e_1$, a smooth divergence-free field
on $\T^3$, and let $A^K_t$ be the inverse of the stochastic flow
$\dd X_t=b_K(X_t)\dd t+\sqrt{2\nu}\dd W_t$. There is a constant $C_\alpha$ independent of
$K$ such that
\[
  \norm{\E\nabla A^K_{t_0}}_{C^\alpha}\le C_\alpha ,
\]
while, whenever $\nu K^2t_0\ge2$,
\[
  \E\norm{\nabla A^K_{t_0}}_{L^\infty}\ \ge\ c\,\sqrt{\nu t_0}\;K^{1-\alpha}
  \ \xrightarrow[K\to\infty]{}\ \infty .
\]
\end{proposition}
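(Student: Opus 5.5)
The plan is to solve the flow explicitly, since $b_K$ depends only on $y$ and the noise is additive, and then to read off both quantities from Lemma~\ref{lem:Z}. Write $W=(W^1,W^2,W^3)$ and $B_t:=\sqrt{2\nu}\,W^2_t$. The second and third components of $X_t(a)$ are pure translations, $a_2+B_t$ and $a_3+\sqrt{2\nu}W^3_t$. The first component is
\[
X^1_t(a)=a_1+a_K\int_0^t\sin\bigl(K(a_2+B_s)\bigr)\dd s+\sqrt{2\nu}W^1_t .
\]
Inverting is explicit: $a_2=x_2-B_t$, and substituting back gives
\[
A^1_t(x)=x_1-\sqrt{2\nu}W^1_t-a_K\int_0^t\sin\bigl(K(x_2+B_s-B_t)\bigr)\dd s .
\]
Hence
\[
\nabla A^K_t=I-a_KK\,\Phi_t(x_2)\,e_1\otimes e_2,\qquad \Phi_t(x_2):=\int_0^t\cos\bigl(K(x_2+B_s-B_t)\bigr)\dd s .
\]
Each $A^K_t$ is a smooth volume-preserving shear, so the setting of Section~\ref{sec:prelim} applies.

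\emph{Mean bound.} Since $\E e^{iK(B_s-B_t)}=e^{-\lambda(t-s)}$ is real, with $\lambda=\nu K^2$, we get $\E\Phi_t(x_2)=\cos(Kx_2)\,(1-e^{-\lambda t})/\lambda$. Therefore
\[
\E\nabla A^K_{t_0}=I-a_KK\lambda^{-1}(1-e^{-\lambda t_0})\cos(Kx_2)\,e_1\otimes e_2 ,
\]
and the prefactor satisfies $a_KK/\lambda=K^{-\alpha}$. Since $\norm{\cos(K\cdot)}_{C^\alpha}\le C(1+K^\alpha)$, the $C^\alpha$ norm of $\E\nabla A^K_{t_0}$ is bounded by a constant $C_\alpha$ independent of $K$. (Strictly the norm is $O(1)$ rather than exactly $1$, which is all that is claimed.)

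\emph{Lower bound.} Since $(\nabla A)_{12}=-a_KK\Phi_{t_0}(x_2)$, we have $\norm{\nabla A^K_{t_0}}_{L^\infty}\ge a_KK\sup_{x_2}\abs{\Phi_{t_0}(x_2)}$. Write $\Phi_t(x_2)=\mathrm{Re}\bigl(e^{iKx_2}\widetilde Z_t\bigr)$ with $\widetilde Z_t:=\int_0^te^{iK(B_s-B_t)}\dd s$. Taking the supremum over $x_2$ rotates the phase freely, so the supremum equals $\abs{\widetilde Z_t}$.

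By time reversal, $\beta'_r:=B_t-B_{t-r}$ is again a Brownian motion with the same law as $B$, and $\widetilde Z_t=\overline{\int_0^te^{iK\beta'_r}\dd r}$. Thus $\abs{\widetilde Z_t}$ has the law of $\abs{Z_t}$ in Lemma~\ref{lem:Z}. When $\lambda t_0\ge2$, \eqref{eq:Zlower} gives
\[
\E\norm{\nabla A^K_{t_0}}_{L^\infty}\ge c\,a_KK\sqrt{t_0/\lambda}=c\,\nu K^{2-\alpha}\frac{\sqrt{t_0}}{\sqrt\nu\,K}=c\sqrt{\nu t_0}\,K^{1-\alpha},
\]
which tends to $\infty$ as $K\to\infty$.

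The only delicate step is the identification $\sup_{x_2}\abs{\Phi_{t_0}}=\abs{\widetilde Z_{t_0}}$ together with the time-reversal equality in law. One must check that the random phase is the same for all $x_2$, which holds because the noise is common to all labels, and that measurability of the supremum is harmless, which holds because $\Phi$ is continuous in $x_2$. Everything else is explicit computation.
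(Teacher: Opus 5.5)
Your proof is correct and follows the paper's argument: you invert the shear flow explicitly, use the Gaussian characteristic function for the mean (prefactor $a_KK/\lambda=K^{-\alpha}$), and bound the $L^\infty$ norm below by $a_KK\sup_{x_2}\abs{\Phi_{t_0}}=a_KK\abs{\widetilde Z_{t_0}}$ combined with \eqref{eq:Zlower}. The time-reversal step is harmless but unnecessary: since $\widetilde Z_t=e^{-iKB_t}\int_0^te^{iKB_s}\dd s$, you have $\abs{\widetilde Z_t}=\abs{Z_t}$ pathwise, which is the identification the paper uses implicitly.
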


\begin{proof}
Write $B_t:=\sqrt{2\nu}\,W^{(2)}_t$ for the noise component in the $y$ direction. The
shear structure makes the flow explicit: the second and third components of $A_t^K$ are
$y-B_t$ and $z-\sqrt{2\nu}W_t^{(3)}$ up to translations, and the first component
integrates the drift along the path, giving
\[
  \nabla A^K_t=I+H^K_t(y)\,e_1\otimes e_2,\qquad
  H^K_t(y)=-a_KK\int_0^t\cos\bigl(K(y-B_t+B_s)\bigr)\dd s .
\]
Since $B_t-B_s$ is centred Gaussian with variance $2\nu(t-s)$,
\[
  \E H^K_t(y)=-\frac{a_K}{\nu K}\bigl(1-e^{-\nu K^2t}\bigr)\cos(Ky),
\]
and with $a_K/(\nu K)=K^{-\alpha}$ and $\norm{\cos(K\cdot)}_{C^\alpha}\le C_\alpha
K^{\alpha}$ the mean bound follows. For the lower bound, note
$\sup_y\abs{H^K_t(y)}=a_KK\abs{Z_t}$ with $Z_t$ as in Lemma~\ref{lem:Z} (choose $y$ to
align the phase), so by \eqref{eq:Zlower}, for $\lambda t_0=\nu K^2t_0\ge2$,
\[
  \E\norm{\nabla A^K_{t_0}}_{L^\infty}\ \ge\ \E\sup_y\abs{H^K_{t_0}(y)}
  \ \ge\ c\,a_KK\sqrt{\frac{t_0}{\nu K^2}}\ =\ c\,\sqrt{\nu t_0}\,K^{1-\alpha}. \qedhere
\]
\end{proof}

\subsection{Energy alone cannot control the mean}

The drifts $b_K$ above are prescribed, not self-consistent. The next obstruction lives
entirely inside the Navier--Stokes class: the solutions are exact, global and smooth, and
the mean deformation is the Constantin--Iyer mean of each solution itself.

\begin{proposition}[No energy-only H\"older bound for the mean]\label{prop:localshear}
There exist exact global smooth Navier--Stokes shear solutions $u^r$, with
$M^r=\E\nabla A^r$ the mean deformation of the Constantin--Iyer flow of $u^r$ itself, such
that $\norm{u^r_0}_{L^2(\T^3)}=1$ for every $r$, while
\[
  \sup_{t>0}\,\norm{M^r(t)}_{C^\alpha}\ \longrightarrow\ \infty\qquad(r\downarrow0).
\]
\end{proposition}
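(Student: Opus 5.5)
We will use the exact shear family of Proposition~\ref{prop:shearM}, for which the Constantin--Iyer mean is explicit, namely $M(t)=I-t\,e_1\otimes\nabla_\perp e^{\nu t\Delta_\perp}f_0$. The idea is to pick zero-mean data $f_0^r$ on $\T^2$ with $\norm{f_0^r}_{L^2}=1$ that concentrate at scale $r$. Then we show that at a well-chosen positive time $t_r$ the $C^\alpha$ norm of $t\,\nabla_\perp e^{\nu t\Delta_\perp}f_0^r$ blows up as $r\downarrow0$. This is consistent with \eqref{eq:shearM}, whose constant degenerates like $\tau^{-\alpha/2}$. The obstruction is precisely that no bound uniform down to $t=0$ can hold.

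The simplest choice is a single Fourier mode. Set $f_0^r(x_2,x_3)=\sqrt2\cos(Kx_2)$ with $K=2\pi n$ and $r=1/K$, so that $u_0^r=(f_0^r,0,0)$ has unit $L^2(\T^3)$ norm and is divergence free. Then $e^{\nu t\Delta_\perp}f_0^r=e^{-\nu K^2t}f_0^r$, and hence
\[
  M^r(t)-I=\sqrt2\,tK\,e^{-\nu K^2t}\sin(Kx_2)\,e_1\otimes e_2 .
\]
Since $\norm{\sin(K\cdot)}_{C^\alpha}\ge c_\alpha K^\alpha$, we get
\[
  \norm{M^r(t)}_{C^\alpha}\ge c_\alpha\,tK^{1+\alpha}e^{-\nu K^2t}-C .
\]
Choosing $t=t_K:=1/(\nu K^2)$ gives the lower bound $c_\alpha e^{-1}\nu^{-1}K^{\alpha-1}$. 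This tends to zero for $\alpha<1$, so the pure mode fails, and a single-mode scaling cannot work. The maximum over $t$ of $tKe^{-\nu K^2t}$ is of order $1/(\nu K)$, which is too small.

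The fix is to concentrate in physical space rather than in frequency: take $f_0^r$ to be an $L^2$-normalised bump of width $r$, e.g. $f_0^r(x_\perp)=r^{-1}\phi(x_\perp/r)$ minus its mean, with $\phi$ fixed and smooth. At time $t\sim r^2/\nu$ the heat flow leaves the profile essentially at scale $r$. Then $\nabla_\perp e^{\nu t\Delta_\perp}f_0^r$ has size $r^{-2}$ and $C^\alpha$ seminorm of order $r^{-2-\alpha}$. Multiplying by $t\sim r^2/\nu$ gives
\[
  \norm{M^r(t_r)-I}_{C^\alpha}\gtrsim\nu^{-1}r^{-\alpha}\to\infty .
\]
The key point is that in two dimensions the $L^2$-normalised bump has amplitude $r^{-1}$, which gains a full power over the one-dimensional oscillation above. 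It is exactly this gain that makes the blow-up occur. The main step to check is the lower bound at $t_r=r^2/\nu$. We will do it by scaling: $e^{\nu t_r\Delta_\perp}f_0^r(x)=r^{-1}(e^{\Delta}\phi)(x/r)$ up to the periodisation and mean-correction errors, which are $O(1)$ and hence negligible against the main term for small $r$. Here $e^{\Delta}\phi$ is a fixed nonconstant Schwartz function, so its gradient has a positive $C^\alpha$ seminorm $c_\phi$, and rescaling yields $c_\phi r^{-2-\alpha}$.

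The remaining obstacle is bookkeeping on the torus. First, the periodisation of the Gaussian-smoothed bump differs from the whole-space one by terms that are exponentially small in $1/r$ in $C^1$. Second, subtracting the mean only changes $f$ by a constant, which is killed by $\nabla_\perp$. Finally, the $C^\alpha$ norm of $M^r$ dominates that of $M^r-I$ minus a constant. Together with the fact that these shear flows are global smooth Navier--Stokes solutions with $M^r$ their own Constantin--Iyer mean, by Proposition~\ref{prop:shearM}, this gives the claim.
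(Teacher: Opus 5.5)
Your proposal is correct and is essentially the paper's argument: an $L^2$-normalised, mean-corrected bump $r^{-1}\phi(\cdot/r)$ on $\T^2$ is inserted into the explicit shear formula $M=I-t\,e_1\otimes\nabla_\perp e^{\nu t\Delta_\perp}f_0$ and evaluated at a time $t_r\sim r^2$. At that time $M^r(t_r)-I$ is, up to negligible image and mean corrections, a fixed nonconstant profile on scale $r$, so $[M^r(t_r)]_\alpha\gtrsim r^{-\alpha}$. The differences are cosmetic: you fix $t_r=r^2/\nu$ and handle the torus by exact image sums instead of $C^1_{\rm loc}$ convergence; note that to get $\norm{u^r_0}_{L^2}=1$ exactly you must still divide by the norm of the mean-corrected bump, which tends to $\norm{\phi}_{L^2(\R^2)}$; and your discarded single-mode computation usefully explains why concentration in both transverse directions is needed.
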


\begin{proof}
Fix a nonconstant $\phi\in C_c^\infty(\R^2)$ and a point $x_\perp^0\in\T^2$. For small
$r>0$ define, in a fixed chart around $x_\perp^0$,
\[
  g^r_0(x_\perp):=r^{-1}\phi\Bigl(\frac{x_\perp-x_\perp^0}{r}\Bigr)
  -\int_{\T^2}r^{-1}\phi\Bigl(\frac{y-x_\perp^0}{r}\Bigr)\dd y,
  \qquad f^r_0:=\frac{g^r_0}{\norm{g^r_0}_{L^2(\T^2)}},
\]
and let $u^r$ be the shear solution of Proposition~\ref{prop:shearM} with datum
$f^r_0$; then $\norm{u^r_0}_{L^2(\T^3)}=1$. The $L^2(\R^2)$ scaling
$\norm{r^{-1}\phi(\cdot/r)}_{L^2}=\norm{\phi}_{L^2}$ shows
$\norm{g^r_0}_{L^2}\to\norm{\phi}_{L^2(\R^2)}>0$. Fix $c>0$ and evaluate at $t_r:=cr^2$.
By \eqref{eq:shearMformula},
\[
  M^r(t_r)-I=-c r^2\,e_1\otimes\nabla_\perp e^{\nu cr^2\Delta_\perp}f^r_0 .
\]
Rescale $x_\perp=x^0_\perp+ry$. On compact sets of $y$, the periodic heat kernel at time
$\nu cr^2$ acting on the $r$-scaled profile converges, as $r\downarrow0$, to the Euclidean
heat semigroup: locally uniformly with all derivatives, because the periodic kernel
differs from the Euclidean one by exponentially small image charges at scale
$r\ll1$. Hence, in $C^1_{\rm loc}$ in $y$,
\[
  M^r(t_r)-I\ \longrightarrow\ -\frac{c}{\norm{\phi}_{L^2}}\;e_1\otimes
  \nabla_y e^{\nu c\Delta_{\R^2}}\phi ,
\]
a nonconstant field. Choose $y_1\ne y_2$ where its values differ; the corresponding
physical points are $r\abs{y_1-y_2}$ apart while the increment of $M^r(t_r)$ between them
stays bounded below, so
$[M^r(t_r)]_{\alpha}\ge c_{\phi,\nu,\alpha}\,r^{-\alpha}\to\infty$.
\end{proof}

Since the solutions $u^r$ are global and smooth, Proposition~\ref{prop:localshear} does
not concern singularities; what it refutes is any estimate of the form
$\sup_{t>0}\norm{M(t)}_{C^\alpha}\le F_{\alpha,\nu}(\norm{u_0}_{L^2})$, that is, an
energy-only bound blind to the length scale of the data. The blow-up of the norm occurs at
times $t_r\sim r^2\to0$, so positive-time smoothing (as in
Proposition~\ref{prop:shearM}) is not contradicted --- but any such bound must degenerate
as $\tau\downarrow0$.

\subsection{Spatial H\"older control does not give parabolic decay}

Let $Q_r(z_0):=B_r(x_0)\times(t_0-r^2,t_0]$ and, for $M\in L^2(Q_r)$, let
\[
  \mathcal Y_M(z_0,r):=\Bigl(\fint_{Q_r(z_0)}\abs{M-(M)_{Q_r(z_0)}}^2\Bigr)^{1/2}
\]
denote the parabolic Campanato excess, where $(M)_{Q}$ is the average over $Q$ and
$\fint$ the normalized integral. Splitting $M-(M)_{Q_r}$ into its spatial oscillation
around the slicewise average $m_r(t):=(M(t,\cdot))_{B_r(x_0)}$ and the temporal
oscillation of $m_r$,
\[
  \mathcal X_M(z_0,r)^2:=\fint_{I_r}\fint_{B_r}\abs{M(t,x)-m_r(t)}^2,
  \qquad
  \mathcal T_M(z_0,r)^2:=\fint_{I_r}\abs{m_r(t)-(M)_{Q_r}}^2 ,
\]
one has the exact orthogonal decomposition (the cross term vanishes upon spatial
integration)
\begin{equation}\label{eq:split}
  \mathcal Y_M(z_0,r)^2=\mathcal X_M(z_0,r)^2+\mathcal T_M(z_0,r)^2 .
\end{equation}

\begin{proposition}[Temporal oscillation is an independent defect]\label{prop:campanato}
If $\sup_{t}\norm{M(t)}_{C^\alpha_x}\le K$ then
$\mathcal X_M(z_0,r)\le C_\alpha Kr^\alpha$ for all $z_0,r$. The temporal part is not
controlled by this hypothesis: for a constant matrix $B\ne0$ and
\[
  M(t,x)=\sin\Bigl(\frac1{T-t}\Bigr)B\qquad(t<T),
\]
one has $\sup_{t<T}\norm{M(t)}_{C^\alpha_x}=\abs B$ and
$\mathcal X_M((x_0,T),r)=0$ for every $r$, yet
\[
  \lim_{r\downarrow0}\mathcal Y_M((x_0,T),r)^2
  =\lim_{r\downarrow0}\mathcal T_M((x_0,T),r)^2=\frac{\abs B^2}{2} .
\]
\end{proposition}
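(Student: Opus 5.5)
The plan has two parts: the positive bound on $\mathcal X_M$, then a direct computation for the explicit example.

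\textbf{Bound on $\mathcal X_M$.} Fix $t\in I_r=(t_0-r^2,t_0]$. For every $x\in B_r(x_0)$, write $M(t,x)-m_r(t)=\fint_{B_r}(M(t,x)-M(t,y))\dd y$. The Hölder hypothesis bounds the integrand by $K\,d(x,y)^\alpha\le K(2r)^\alpha$. Hence $\abs{M(t,x)-m_r(t)}\le 2^\alpha Kr^\alpha$ pointwise. Squaring and averaging over $B_r\times I_r$ gives $\mathcal X_M\le 2^\alpha K r^\alpha$, so $C_\alpha=2^\alpha$ works. On the torus we take $r$ small enough that $B_r$ is a genuine ball; for large $r$ the bound is trivial after enlarging $C_\alpha$.

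\textbf{The example: Hölder norm and $\mathcal X_M$.} Here $M(t,\cdot)$ is spatially constant. So the Hölder seminorm vanishes and $\norm{M(t)}_{C^\alpha_x}=\abs{\sin(1/(T-t))}\,\abs B$. Its supremum over $t<T$ is $\abs B$, attained at times where the sine equals $\pm1$. Spatial constancy also gives $m_r(t)=M(t)$, hence $\mathcal X_M((x_0,T),r)=0$. By the orthogonal split \eqref{eq:split}, $\mathcal Y_M^2=\mathcal T_M^2$. The time cylinder is half-open at $T$, where $M$ is undefined, but a single point is null and does not affect the averages.

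\textbf{The example: limit of $\mathcal T_M$.} Write $g(t)=\sin(1/(T-t))$. Then
\[
\mathcal T_M^2=\abs B^2\Bigl(\fint_{I_r}g^2-\bigl(\textstyle\fint_{I_r}g\bigr)^2\Bigr),
\]
and it suffices to show $\fint_{I_r}g^2\to\tfrac12$ and $\fint_{I_r}g\to0$ as $r\downarrow0$. Substitute $s=1/(T-t)$, so $\dd t=s^{-2}\dd s$ and $I_r$ corresponds to $s\in(r^{-2},\infty)$. This gives
\[
\fint_{I_r}g=r^{-2}\int_{r^{-2}}^\infty \frac{\sin s}{s^2}\dd s .
\]
Integrating by parts, the integral is $O(r^{4})$: the boundary term is $\cos(r^{-2})\,r^{4}$, and the remainder is bounded by $2\int_{r^{-2}}^\infty s^{-3}\dd s=r^4$. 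Hence $\fint_{I_r}g=O(r^2)\to0$. For the square, use $\sin^2 s=\tfrac12-\tfrac12\cos 2s$. This gives $\fint_{I_r}g^2=\tfrac12-\tfrac12 r^{-2}\int_{r^{-2}}^\infty s^{-2}\cos 2s\dd s$, and the same integration by parts makes the correction $O(r^2)$. Therefore $\mathcal T_M^2\to\abs B^2/2$.

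The only real obstacle is this oscillatory-average step. The substitution together with one integration by parts resolves it, since the weight $s^{-2}$ concentrates near $s=r^{-2}$ only on a scale $\sim r^{-2}$, which is much longer than the period.
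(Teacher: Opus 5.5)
Your proof is correct and follows the paper's argument: the pointwise averaging bound gives $\mathcal X_M$, and the example is reduced via $\mathcal T_M^2=\abs B^2\bigl(\fint g^2-(\fint g)^2\bigr)$ to two oscillatory averages, each handled by one substitution and one integration by parts, using $\sin^2=\tfrac12-\tfrac12\cos(2\,\cdot)$ for the second moment. The only differences are cosmetic: you use the single substitution $s=1/(T-t)$ where the paper uses $s=T-t$ followed by $y=1/s$, and you state the constant $C_\alpha=2^\alpha$ and the variance identity explicitly where the paper leaves them implicit.
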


\begin{proof}
The spatial estimate follows by averaging
$\abs{M(t,x)-m_r(t)}\le C_\alpha Kr^\alpha$. For the example, substitute $s=T-t$,
$h=r^2$; it suffices to show
\[
  \frac1h\int_0^h\sin\bigl(1/s\bigr)\dd s=O(h),
  \qquad
  \frac1h\int_0^h\sin^2\bigl(1/s\bigr)\dd s\longrightarrow\frac12 .
\]
For the first, substitute $y=1/s$ and integrate by parts once:
\[
  \int_0^h\sin(1/s)\dd s=\int_{1/h}^\infty\frac{\sin y}{y^{2}}\dd y
  =h^{2}\cos(1/h)-2\int_{1/h}^\infty\frac{\cos y}{y^{3}}\dd y ,
\]
and the last integral is bounded in absolute value by
$\int_{1/h}^\infty y^{-3}\dd y=\tfrac12h^{2}$; hence the whole expression is $O(h^2)$, and
dividing by $h$ gives $O(h)$. For the second, write
$\sin^2(1/s)=\tfrac12-\tfrac12\cos(2/s)$ and apply the same argument to the oscillatory
part. Hence the slicewise means $m_r(t)=\sin(1/(T-t))B$ have vanishing time average but
persistent variance $\tfrac12\abs B^2$ as $r\downarrow0$.
\end{proof}

Consequently the implication
``\,$L^\infty_tC^\alpha_x$ bound on $M$ $\Rightarrow\ \mathcal Y_M(z,r)\lesssim
r^\alpha$\,'' is false: parabolic Campanato decay requires temporal compactness that a
purely spatial H\"older bound does not provide. Any local regularity scheme for the mean
coordinate must control $\mathcal T_M$ by a separate mechanism. (The field in the example
is not claimed to arise from a Navier--Stokes flow; the point is that the
\emph{functional} implication fails, so a proof would have to use the equation, not just
the bound.)

\section{A critical continuation criterion and affine rigidity}\label{sec:crit}

\subsection{The Serrin--Weber quotient}

For $1<q<\infty$ define the gauge quotient norm
\[
  \norm{[F]}_{L^q/\nabla W^{1,q}}:=\inf_{\phi\in W^{1,q}(\T^3)}\norm{F-\nabla\phi}_{L^q} .
\]

\begin{theorem}[Serrin--Weber quotient criterion]\label{thm:SW}
Let $u$ be a maximal classical periodic solution on $[0,T_*)$, and let
$\bar F=M^\top N+\mathcal R$ as in \eqref{eq:decomp}, so that $u=\Leray\bar F$. For
$1<p\le\infty$, $1<q<\infty$ and any interval $I=(\tau,T_*)$,
\begin{equation}\label{eq:SW}
  C_q^{-1}\,\norm{u}_{L^p(I;L^q)}
  \ \le\ \Bigl\|\,t\mapsto\norm{[\bar F(t)]}_{L^q/\nabla W^{1,q}}\Bigr\|_{L^p(I)}
  \ \le\ \norm{u}_{L^p(I;L^q)} .
\end{equation}
If $q>3$ and $2/p+3/q\le1$, finiteness of the middle quantity implies that $u$ extends
past $T_*$; the same conclusion holds at the endpoint $(p,q)=(\infty,3)$ by the
backward-uniqueness theorem of Escauriaza, Seregin and \v Sver\'ak
\cite{EscauriazaSereginSverak2003}, whose localization to the periodic setting is
standard.
\end{theorem}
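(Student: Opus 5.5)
The plan is to prove the two-sided inequality \eqref{eq:SW} pointwise in time, and then integrate in $t$ and apply the classical continuation criteria. Fix $t$. By \eqref{eq:CI} and \eqref{eq:decomp}, $u(t)=\Leray\bar F(t)$, and $u(t)$ is divergence-free with zero-mean gradient part, so the Helmholtz decomposition gives $\bar F(t)=u(t)+\nabla\phi_0$ with $\nabla\phi_0=(I-\Leray)\bar F(t)$. Since $\bar F(t)$ is smooth, $\phi_0\in W^{1,q}$. Taking $\phi=\phi_0$ in the infimum shows that $\norm{[\bar F(t)]}_{L^q/\nabla W^{1,q}}\le\norm{u(t)}_{L^q}$; this is the upper bound.

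For the lower bound, $\Leray$ annihilates gradients: for $\phi\in W^{1,q}$ we have $\Leray\nabla\phi=0$, first for smooth $\phi$ by the Fourier-multiplier description on $\T^3$ and then by density together with $L^q$-boundedness of $\Leray$. Hence, for every such $\phi$,
\[
\norm{u(t)}_{L^q}=\norm{\Leray(\bar F(t)-\nabla\phi)}_{L^q}\le C_q\norm{\bar F(t)-\nabla\phi}_{L^q},
\]
and taking the infimum over $\phi$ gives the left inequality with $C_q=\norm{\Leray}_{L^q\to L^q}$, which is finite for $1<q<\infty$. Both bounds hold for every $t\in I$. The quotient norm is a measurable function of $t$, since it equals $\norm{u(t)}_{L^q}$ up to constants and is in fact lower semicontinuous as an infimum of continuous functions. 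Taking $L^p(I)$ norms, including $p=\infty$, then yields \eqref{eq:SW}.

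For the continuation statement, finiteness of the middle quantity implies $u\in L^p(\tau,T_*;L^q)$ by the left inequality. Since $u$ is smooth on $[0,\tau]$, this gives $u\in L^p(0,T_*;L^q)$. When $q>3$ and $2/p+3/q\le1$, the Prodi--Serrin--Ladyzhenskaya criterion for classical solutions on $\T^3$ rules out blow-up at $T_*$, so $u$ extends. The main obstacle is the endpoint $(p,q)=(\infty,3)$, where we must invoke \cite{EscauriazaSereginSverak2003}. I would argue by contradiction: a singular point at $T_*$ would allow a rescaling and blow-up argument, via local suitable weak solution theory, that produces a nontrivial ancient solution, and backward uniqueness would force it to vanish. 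The periodic case reduces to the whole-space local statement because the ESS argument is local in space; I would only cite this localization, as the statement itself does, rather than reprove it.
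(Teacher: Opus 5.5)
Your proposal is correct and follows the paper's proof essentially step for step. The lower bound uses $\Leray\nabla\phi=0$ together with $L^q$-boundedness of $\Leray$, the upper bound uses the Helmholtz choice $\phi=\psi$, and you then take $L^p$ norms in time and cite Prodi--Serrin, with Escauriaza--Seregin--\v Sver\'ak at the endpoint. One small slip: an infimum of continuous functions is upper, not lower, semicontinuous. Measurability is not affected, since the quotient seminorm is $1$-Lipschitz on $L^q$, so $t\mapsto\norm{[\bar F(t)]}_{L^q/\nabla W^{1,q}}$ is in fact continuous.
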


\begin{proof}
For every $\phi\in W^{1,q}$ we have $u=\Leray(\bar F-\nabla\phi)$, since
$\Leray\nabla\phi=0$; boundedness of the periodic Leray projection on $L^q$ gives
$\norm{u(t)}_{L^q}\le C_q\inf_\phi\norm{\bar F(t)-\nabla\phi}_{L^q}$. Conversely, the
Helmholtz decomposition writes $\bar F=u+\nabla\psi$ with $\psi\in W^{1,q}$, and the
choice $\phi=\psi$ shows the infimum is at most $\norm{u(t)}_{L^q}$. Taking $L^p$ norms in
time gives \eqref{eq:SW}. The continuation statement in the range $q>3$,
$2/p+3/q\le1$ is the Prodi--Serrin criterion \cite{Prodi1959,Serrin1963}.
\end{proof}

Two comments. First, the quotient on the left of \eqref{eq:SW} is gauge-invariant: it
discards precisely the gradient part that the Leray projection cannot see, so it is the
weakest norm of the Weber field that still controls the velocity. Second,
\eqref{eq:SW} shows that a H\"older bound on $\bar F$ --- the target that the covariance
bounds of Section~\ref{sec:cov} naturally suggest --- is strictly stronger than necessary:
a time-integrated critical bound on the quotient already suffices for continuation.

\subsection{Affine rigidity of the mean coordinate}

We now turn to rigidity along blow-up limits. We use the standard local framework of
suitable weak solutions; see \cite{CaffarelliKohnNirenberg1982}. A pair $(u,\pi)$ on a
parabolic domain is a \emph{suitable} solution if it solves \eqref{eq:NS} in the sense of
distributions, has the natural local energy regularity, and satisfies the local energy
inequality. A solution is \emph{ancient} if it is defined on $\R^3\times(-\infty,0]$. We
say an ancient suitable pair satisfies a \emph{Type-I bound} if
\begin{equation}\label{eq:typeI}
  \Lambda:=\sup_{R>0}\ R^{-2}\int_{Q_R(0)}\bigl(\abs u^3+\abs\pi^{3/2}\bigr)\ <\ \infty ,
\end{equation}
where $Q_R(0)=B_R(0)\times(-R^2,0]$. Finally, a triple $(u,\pi,\Theta)$ is an ancient
suitable triple if $(u,\pi)$ is ancient suitable and $\Theta$ solves
$\mathcal D_u\Theta=0$ on $\R^3\times(-\infty,0]$; this is the structure inherited by
blow-up limits of the mean coordinate $\Theta=\E A_t$, whose gradient is $M$.

\begin{theorem}[Affine rigidity]\label{thm:rigidity}
Let $(u,\pi,\Theta)$ be an ancient suitable triple satisfying the Type-I bound
\eqref{eq:typeI}. If
\[
  \Theta(x,t)=Bx+c(t)\qquad\text{with }B\text{ constant and }\rank B\ge1,
\]
then $u\equiv0$.
\end{theorem}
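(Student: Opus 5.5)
Substituting the affine ansatz into $\mathcal D_u\Theta=0$ should turn the rigidity question into a statement that a component of $u$ vanishes. Then the Type-I bound and the structure of ancient solutions should finish the argument.

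\textbf{Step 1: a linear constraint on $u$.} Substitute $\Theta(x,t)=Bx+c(t)$ into $\mathcal D_u\Theta=0$. We have $\Delta\Theta=0$ and $\nabla\Theta=B$, so
\[
  c'(t)+Bu(x,t)=0\qquad\text{on }\R^3\times(-\infty,0].
\]
This holds pointwise if $u$ is smooth. For a suitable solution it holds in the distributional sense, and since $c'$ depends only on $t$ this still means $Bu(x,t)=-c'(t)$ for a.e.\ $t$. Hence for every row $b$ of $B$, the scalar $b\cdot u(\cdot,t)$ is spatially constant. Since $\rank B\ge1$, fix a unit vector $e$ in the row space of $B$. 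Then $u\cdot e=\kappa(t)$ for some function $\kappa$ of time alone.

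\textbf{Step 2: removing the constant.} The Type-I bound \eqref{eq:typeI} gives $\int_{Q_R}|u|^3\le\Lambda R^2$. Now $\int_{Q_R}|u\cdot e|^3=|B_R|\int_{-R^2}^0|\kappa|^3\dd t\simeq R^3\int_{-R^2}^0|\kappa|^3$. Comparing the two forces $\int_{-R^2}^0|\kappa|^3\lesssim\Lambda/R$. Letting $R\to\infty$ gives $\kappa\equiv0$, so $u\cdot e\equiv0$.

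\textbf{Step 3: from one vanishing component to $u\equiv0$.} Rotate coordinates so that $e=e_3$. Then $u=(u_1,u_2,0)$, and the third momentum equation reduces to $\partial_3\pi=0$. The divergence condition becomes $\partial_1u_1+\partial_2u_2=0$. This step is the main obstacle, because $u_1,u_2$ may still depend on $x_3$.

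I would take the divergence of the equation, which gives $-\Delta\pi=\sum_{i,j\le2}\partial_i\partial_j(u_iu_j)$, and use $\partial_3\pi=0$ to constrain the $x_3$-dependence. The cleanest route I would try first is a Liouville argument along the Type-I rescalings. The class of triples satisfying the hypotheses is invariant under the scaling $u_\lambda(x,t)=\lambda u(\lambda x,\lambda^2t)$, and the Type-I quantity $\Lambda$ is scale-invariant. One can therefore pass to limits along $\lambda\to0$ and $\lambda\to\infty$ (blow-down and blow-up) using the CKN compactness for suitable solutions. Each limit again has $u_3\equiv0$.

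For solutions with $u_3\equiv0$, I would then show that each $x_3$-slice satisfies the 2D Euler/NS-type system $\partial_tu_h+u_h\cdot\nabla_hu_h-\nu\partial_3^2u_h-\nu\Delta_hu_h+\nabla_h\pi=0$ with $\pi$ independent of $x_3$. From there I would use the $\varepsilon$-regularity of CKN together with the Type-I bound to get $u\in L^\infty$ and boundedness at all scales. The third component of the vorticity is then a scalar solving an advection--diffusion equation with no stretching term, since $\omega_3$ is transported by the horizontal field. An ancient bounded solution of such an equation with the scale-invariant decay from \eqref{eq:typeI} must vanish by a parabolic Liouville theorem. Hence $\omega_3\equiv0$.

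With $\omega_3\equiv0$ and $\partial_1u_1+\partial_2u_2=0$, $u_h(\cdot,x_3,t)$ is harmonic in the horizontal variables. It is also in the Type-I $L^3$ class, so it is constant in $x_h$. Step 2 applied in the horizontal directions then forces it to vanish, so $u\equiv0$.

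If the vorticity Liouville step proves too delicate, the fallback is to apply the Escauriaza--Seregin--Šverák backward uniqueness as in Theorem~\ref{thm:SW}, with the Type-I bound supplying the needed decay of the rescaled limits.
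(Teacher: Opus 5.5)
Your Steps 1 and 2 are correct and match the paper's rank-one case. Reducing every rank $\ge1$ to a single direction $e$ in the row space of $B$ is legitimate; the paper treats ranks $3$ and $2$ separately only because elementary arguments suffice there.

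\textbf{The gap is in Step 3,} where you assert that CKN $\varepsilon$-regularity ``together with the Type-I bound'' gives $u\in L^\infty$.
\begin{itemize}
\item The bound \eqref{eq:typeI} only says the scale-invariant quantities are at most $\Lambda$. It does not make them small, and CKN needs them below a universal threshold.
\item The bound is centred at the origin, so it says nothing at small scales around other points.
\item The hypothesis $u_3\equiv0$ must be used exactly here, to manufacture smallness. In your sketch it enters only afterwards, through the $\omega_3$ equation, and that step already presupposes the regularity you have not obtained.
\item Blow-up/blow-down compactness does not produce smallness, and limits with $u_3\equiv0$ just restate the problem.
\item The Escauriaza--Seregin--\v{S}ver\'ak fallback needs an $L^\infty_tL^3_x$ bound that \eqref{eq:typeI} does not supply; the paper says as much in open problem (iii).
\end{itemize}
The missing ingredient is a one-component $\varepsilon$-regularity criterion. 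The paper rescales $u$ from $Q_R(\widehat z)\subset Q_{2R}(0)$ to $Q_1$, obtaining bound $4\Lambda$ and an identically vanishing third component. It then uses the Kukavica--Rusin--Ziane criterion, in quantitative form, to find $\kappa=\kappa(\Lambda,\nu)$ at which the CKN quantity is below threshold. The CKN interior estimate gives $R\abs{u(z)}\le C(\Lambda,\nu)$ for all large $R$, hence $u(z)=0$. This route needs no vorticity Liouville argument at all.

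\textbf{Two later steps also need repair, even granting regularity.}
\begin{itemize}
\item The Liouville theorem for bounded ancient solutions of $\partial_t\omega_3+u\cdot\nabla\omega_3-\nu\Delta\omega_3=0$ needs a justification you have not given, since the drift is known only through \eqref{eq:typeI}. It also yields only constancy, not vanishing. Vanishing would follow from, e.g., the scale-invariant bound $\int_{Q_R}\abs{\nabla u}^2\lesssim R$ coming from the local energy inequality.
\item The last step is wrong as written. Horizontal harmonicity plus \eqref{eq:typeI} gives only $u_h=w(x_3,t)$. For such $w$, $\int_{Q_R}\abs{w}^3\simeq R^2\iint\abs{w}^3\dd x_3\dd t$, which is perfectly compatible with $\Lambda R^2$. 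The $R^3$-versus-$R^2$ count of Step 2 works only for dependence on $t$ alone.
\end{itemize}
To finish, argue as in the paper's rank-two case. Once $u_h=w(x_3,t)$, $u$ is a shear flow with $u\cdot\nabla u=0$. Then $\nabla_h\pi$ depends on $t$ only, and a nonzero value is excluded by the pressure part of \eqref{eq:typeI}. So $w$ is an ancient solution of the one-dimensional heat equation with $\iint\abs{w}^3<\infty$, and interior estimates force $w\equiv0$.
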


\begin{proof}
Since $\Theta$ is affine in $x$, $\Delta\Theta=0$, and $\mathcal D_u\Theta=0$ reduces to
\begin{equation}\label{eq:Bu}
  Bu(x,t)=-c'(t) .
\end{equation}

\emph{Case $\rank B=3$.} Then \eqref{eq:Bu} forces $u(x,t)=-B^{-1}c'(t)$, a function of
time alone. Substituting into \eqref{eq:NS} gives $\nabla\pi=-u'(t)$, so
$\pi=-u'(t)\cdot x+\pi_0(t)$ has an affine spatial part. Suppose $u'\ne0$ on a set of
positive measure, and fix a compact interval $J\subset(-\infty,0)$ with
$\int_J\abs{u'}^{3/2}>0$. For any vector $a\ne0$ and any $b\in\R$,
\[
  \int_{B_R}\abs{a\cdot x+b}^{3/2}\dd x\ \ge\ c\,\abs a^{3/2}R^{3/2}\cdot R^3
\]
uniformly in $b$, since $\abs{a\cdot x+b}\ge\tfrac14\abs a R$ on a fixed fraction of the
ball whatever the value of $b$. Hence, for $R$ so large that $B_R\times J\subset Q_R(0)$,
\[
  R^{-2}\int_{Q_R}\abs\pi^{3/2}
  \ \ge\ c\,R^{5/2}\int_J\abs{u'(t)}^{3/2}\dd t\ \longrightarrow\ \infty ,
\]
contradicting \eqref{eq:typeI}; hence $u'=0$ and $u$ is a constant vector. A nonzero constant makes
$R^{-2}\int_{Q_R}\abs u^3\simeq R^3\to\infty$, again contradicting \eqref{eq:typeI}. So
$u=0$.

\emph{Case $\rank B=2$.} Let the unit vector $q$ span $\ker B$. Equation \eqref{eq:Bu}
determines the component of $u$ orthogonal to $q$ as a function of time alone; write
$u=v(t)+q\,f(x,t)$ with $v(t)\perp q$. The components of \eqref{eq:NS} orthogonal to $q$
give $\nabla'\pi=-v'(t)$ (with $\nabla'$ the gradient in the $q^\perp$ directions), so
$\pi$ again has an affine part, and the Type-I pressure bound forces $v'=0$ exactly as
above; the velocity bound then forces $v=0$. Incompressibility gives $q\cdot\nabla f=0$,
whence $u\cdot\nabla u=f\,(q\cdot\nabla)(qf)=0$; the component of \eqref{eq:NS} along $q$
becomes $\partial_tf-\nu\Delta f=-\partial_q\pi=:g(t)$, and a nonzero $g$ would again
produce an affine pressure part in the $q$ direction, excluded by \eqref{eq:typeI}. Thus
$f$ is a bounded-in-$L^3_{\rm loc}$ ancient caloric function, and the interior estimate
$\abs{f(z)}\le CR^{-5/3}\norm{f}_{L^3(Q_R(z))}\le C\Lambda^{1/3}R^{-1}\to0$ gives $f=0$.

\emph{Case $\rank B=1$.} Write $B=a\otimes e$ with $a\ne0$, $\abs e=1$. Equation
\eqref{eq:Bu} gives $a\,(e\cdot u)=-c'(t)$, so $e\cdot u=h(t)$ depends on time alone. Fix a
compact time interval $J\subset(-\infty,0)$; for $R$ large, $B_R\times J\subset Q_R(0)$
and \eqref{eq:typeI} give
\[
  \Lambda R^2\ \ge\ \int_{Q_R}\abs u^3\ \ge\ \abs{B_R}\int_J\abs{h}^3
  \ \simeq\ R^3\int_J\abs h^3 ,
\]
so $h\equiv0$ on $J$, and since $J$ was arbitrary, $e\cdot u\equiv0$. Rotate coordinates so
that $e=e_3$; then $u_3\equiv0$ identically. Fix a Lebesgue point $z=(x_0,t_0)$ with
$t_0<0$ and $\widehat z=(x_0,\widehat t_0)$ with $t_0<\widehat t_0<0$; for large $R$,
$Q_R(\widehat z)\subset Q_{2R}(0)$, so the rescaled pair on $Q_1$ satisfies the
scale-invariant bound with constant $4\Lambda$ and has vanishing third component. The
quantitative one-component regularity criterion of Kukavica, Rusin and Ziane
\cite{KukavicaRusinZiane2017} then provides $\kappa=\kappa(\Lambda,\nu)>0$ such that the
Caffarelli--Kohn--Nirenberg quantity of the rescaled solution on $Q_\kappa$ falls below
the universal $\varepsilon$-regularity threshold of
\cite{CaffarelliKohnNirenberg1982}; undoing the scaling, the CKN interior estimate, in the
critical-Morrey form of \cite{Seregin2006Regularity}, gives
$R\,\abs{u(z)}\le C(\Lambda,\nu)$ for all large $R$. Letting $R\to\infty$ yields $u(z)=0$;
Lebesgue points of this type are dense, so $u\equiv0$ almost everywhere and, by the
regularity just obtained, everywhere.
\end{proof}

\begin{remark}[On the use of the one-component criterion]\label{rem:KRZ}
In the rank-one case we have used the regularity criterion of
\cite{KukavicaRusinZiane2017} in a quantitative form: a radius
$\kappa=\kappa(\Lambda,\nu)$ at which the Caffarelli--Kohn--Nirenberg quantity falls
below the universal threshold, uniformly over all rescaled solutions satisfying the bound
$4\Lambda$ with vanishing third component. This uniformity is not part of the literal
statement in \cite{KukavicaRusinZiane2017}, but it follows from their proof: the argument
proceeds by an $\varepsilon$-regularity iteration whose constants depend only on the
scale-invariant bounds assumed, and the smallness hypothesis on the third component holds
here trivially, since that component vanishes identically. Alternatively, the uniformity
can be recovered by a standard compactness argument within the class of suitable
solutions obeying \eqref{eq:typeI}, as in \cite{AlbrittonBarkerPrange2023}. We prefer to
state this dependence explicitly rather than leave it implicit.
\end{remark}

\begin{remark}[Rank zero is genuinely different]\label{rem:rankzero}
If $B=0$, then $\mathcal D_u\Theta=0$ forces only $c'\equiv0$ and imposes no constraint
whatsoever on $u$: the mean-coordinate equation carries no velocity information in this
degenerate limit. Consistently with this, in the rank-zero regime the Weber field reduces
to its covariance part, and by Theorem~\ref{thm:SW} controlling its quotient norm is
\emph{equivalent} to controlling the Serrin norm of the velocity itself. The rank-zero
branch is therefore not a reduction of the regularity problem but a reformulation of it,
and we do not claim otherwise.
\end{remark}

\section{Open problems}\label{sec:further}

The results above delimit what the mean--covariance structure gives unconditionally. The
following questions, which we do not answer, mark the boundary.

\begin{enumerate}
\item[(i)] \emph{Positive-time mean bound.} Does there exist, for each $\tau>0$ and
$0<\alpha<1$, a finite function $\Phi_\alpha$ with
$\sup_{\tau\le t<T_*}\norm{\E\nabla A_t}_{C^\alpha}
\le\Phi_\alpha(\nu,\tau,\norm{u_0}_{L^2})$ for all maximal classical solutions?
Proposition~\ref{prop:shearM} shows the shear class satisfies such a bound with the
prototype rate $\tau^{-\alpha/2}$, and Proposition~\ref{prop:localshear} shows a
$\tau$-singular factor is unavoidable.
\item[(ii)] \emph{Self-consistent covariance closure.} Does a uniform bound on
$\norm{M(t)}_{C^\alpha}$, for the Constantin--Iyer law of an actual Navier--Stokes
solution, imply a bound on the Serrin--Weber quotient of $\bar F$?
Proposition~\ref{prop:degeneracy} shows the common noise provides no smoothing in the
separation variable, so any proof must extract increment cancellation from
self-consistency rather than from absolute covariance bounds; and
Remark~\ref{rem:onepoint} shows one-point statistics cannot suffice.
\item[(iii)] \emph{Rank-zero rigidity.} Theorem~\ref{thm:rigidity} leaves open the case
$B=0$, where by Remark~\ref{rem:rankzero} the question is equivalent to a critical
velocity bound. The Liouville theorems available for Type-I ancient solutions
\cite{AlbrittonBarker2019} require a backward $L^3$ control that \eqref{eq:typeI} alone
does not supply, so they do not close this case directly. Whether the two-point structure
of Section~\ref{sec:twopoint} can supply the missing information is, in our view, the
central question raised by this circle of ideas.
\end{enumerate}

\subsection*{Acknowledgements}

\end{document}